\begin{document}
\newtheorem{thm}{Theorem}
\newtheorem{cor}[thm]{Corollary}
\newtheorem{lem}[thm]{Lemma}
\newtheorem{claim}[thm]{Claim}
\newtheorem{axiom}[thm]{Axiom}
\newtheorem{conj}[thm]{Conjecture}
\newtheorem{fact}[thm]{Fact}
\newtheorem{hypo}[thm]{Hypothesis}
\newtheorem{assum}[thm]{Assumption}
\newtheorem{prop}[thm]{Proposition}
\newtheorem{crit}[thm]{Criterion}
\newtheorem{defn}[thm]{Definition}
\newtheorem{exmp}[thm]{Example}
\newtheorem{rem}[thm]{Remark}
\newtheorem{prob}[thm]{Problem}
\newtheorem{prin}[thm]{Principle}
\newtheorem{alg}{Algorithm}

\newcommand{\osd}{$(\mathbf{DSD})_{\mathbf{T}}$}
\newcommand{\ssd}{$\mathbf{SSD}$}
\newcommand{\sop}{$\mathbf{SOP}$}
\newcommand{\ocp}{$(\mathbf{OCP})_{\mathbf{T}}$}

\newcommand{\red}[1]{{\bf\color{red}{#1}}}
\newcommand{\blue}[1]{{\bf\color{blue}{#1}}}


\title{Shape turnpike for linear parabolic PDE models}                      

\author[1]{Gontran Lance}
\ead{lance@ljll.math.upmc.fr}

\author[1]{Emmanuel Tr\'elat}
\ead{emmanuel.trelat@sorbonne-universite.fr}

\author[2,3,4]{Enrique Zuazua}
\ead{enrique.zuazua@fau.de}

\address[1]{Sorbonne Universit\'e, CNRS, Universit\'e de Paris, Inria, Laboratoire Jacques-Louis Lions (LJLL), F-75005 Paris, France.}
\address[2]{Chair in Applied Analysis, Alexander von Humboldt-Professorship, Department of Mathematics  Friedrich-Alexander-Universit\"at, Erlangen-N\"urnberg, 91058 Erlangen, Germany.}
\address[3]{Chair of Computational Mathematics, Fundaci\'on Deusto Av. de las Universidades 24, 48007 Bilbao, Basque Country, Spain.}
\address[4]{Departamento de Matem\'aticas, Universidad Aut\'onoma de Madrid, 28049 Madrid, Spain.}



\begin{abstract}
We introduce and study the turnpike property for time-varying shapes, within the viewpoint of optimal control.
We focus here on second-order linear parabolic equations where the shape acts as a source term and we seek the optimal time-varying shape that minimizes a quadratic criterion.
We first establish existence of optimal solutions under some appropriate sufficient conditions. We then provide necessary conditions for optimality in terms of adjoint equations and, using the concept of strict dissipativity, we prove that state and adjoint satisfy the measure-turnpike property, meaning that the extremal time-varying solution remains essentially close to the optimal solution of an associated static problem. We show that the optimal shape enjoys the exponential turnpike property in term of Hausdorff distance for a Mayer quadratic cost.
We illustrate the turnpike phenomenon in optimal shape design with several numerical simulations.
\end{abstract}



\begin{keyword}
optimal shape design, turnpike, strict dissipativity, direct methods, parabolic equation 
\end{keyword}

\maketitle

\section{Introduction}
\label{sec:intro}
We start with an informal presentation of the turnpike phenomenon for general dynamical optimal shape problems, which has never been adressed in the litterature until now. Let $T>0$, we consider the problem of determining a time-varying shape $t \mapsto \omega(t)$ (viewed as a control, as in \cite{MR3350723}) minimizing the cost functional
\begin{equation}
J_T(\omega) =  \frac{1}{T}\int_0^T f^0\big(y(t),\omega(t)\big) \, dt + g\big(y(T),\omega(T)\big)
\label{shapemin}
\end{equation}
under the constraints
\begin{equation}
\dot{y}(t) = f\big(y(t),\omega(t)\big), \qquad R\big(y(0),y(T)\big) = 0
\label{pde}
\end{equation}
where (\ref{pde}) may be a partial differential equation with various terminal and boundary conditions.

We associate to the dynamical problem \eqref{shapemin}-\eqref{pde} a \emph{static} problem, not depending on time,
\begin{equation}
\displaystyle{\min_{\omega} f^0(y,\omega)}, \quad  f(y,\omega) = 0
\label{shape_static}
\end{equation}
i.e., the problem of minimizing the instantaneous cost under the constraint of being an equilibrium of the control dynamics.

According to the well known turnpike phenomenon, one expects that, for $T$ large enough, optimal solutions of \eqref{shapemin}-\eqref{pde} remain most of the time ``close" to an optimal (stationary) solution of the static problem (\ref{shape_static}). In this paper, we will investigate this problem in the linear parabolic case.

The turnpike phenomenon was first observed and investigated by economists for discrete-time optimal control problems (see \cite{turnpikefirst, 10.2307/1910955}). There are several possible notions of turnpike properties, some of them being stronger than the others (see \cite{MR3362209}). \emph{Exponential turnpike} properties have been established in \cite{GruneSchallerSchiela, MR3124890, MR3616131, TrelatZhangZuazua,MR3271298} for the optimal triple resulting of the application of Pontryagin's maximum principle, ensuring that the extremal solution (state, adjoint and control) remains exponentially close to an optimal solution of the corresponding static controlled problem, except at the beginning and at the end of the time interval, as soon as $T$ is large enough. This follows from hyperbolicity properties of the Hamiltonian flow. 
For discrete-time problems it has been shown in \cite{MR3217211, MR3654613, MR3782393, MR3470445, measureturnpikeTZ} that exponential turnpike is closely related to strict dissipativity.
\emph{Measure-turnpike} is a weaker notion of turnpike, meaning that any optimal solution, along the time frame, remains close to an optimal static solution except along a subset of times of small Lebesgue measure. It has been proved in \cite{MR3654613, measureturnpikeTZ} that measure-turnpike follows from strict dissipativity or from strong duality properties.

Applications of the turnpike property in practice are numerous. Indeed, the knowledge of a static optimal solution is a way to reduce significantly the complexity of the dynamical optimal control problem. For instance it has been shown in \cite{MR3271298} that the turnpike property gives a way to successfully initialize direct or indirect (shooting) methods in numerical optimal control, by initializing them with the optimal solution of the static problem. In shape design and despite of technological progress, it is easier to design pieces which do not evolve with time. Turnpike can legitimate such decisions for large-time evolving systems. 
 
\section{Shape turnpike for linear parabolic equation}
\label{sec:Shape Turnpike and heat equation}
Throughout the paper, we denote by:
\begin{itemize}
\item $Q \subset \mathbf{R}^d$, $d \geq 1$ and $|Q|$ its Lebesgue measure if $Q$ measurable subset;
\item $\big( p,q \big) $ the scalar product in $L^2(\Omega)$ of $p,q \mbox{ in } L^2(\Omega)$;
\item $\Vert y \Vert$ the $L^2$-norm of $y\in L^2(\Omega)$;
\item $\chi_{\omega}$ the indicator (or characteristic) function of $\omega \subset \mathbf{R}^d$;
\item $d_{\omega}$ the distance function to the set $\omega \subset \mathbf{R}^d$.
\end{itemize}

Let $\Omega \subset \mathbf{R}^d$ ($d \geq 1$) be an open bounded Lipschitz domain.
We consider the uniformly elliptic second-order differential operator 
$$
Ay=-\sum_{i,j=1}^d \partial_{x_j}\big(a_{ij}(x)\partial_{x_i}y\big)+\sum_{i=1}^d b_{i}(x)\partial_{x_i}y+c(x)y
$$
with $a_{ij},b_i \in C^1(\Omega)$, $c\in L^{\infty}(\Omega)$ with $c\geq 0$. We consider the operator $(A,D(A))$ defined on the domain $D(A)$ encoding Dirichlet conditions $y_{\vert\partial\Omega}=0$; when $\Omega$ is $C^2$ or a convex polytop in $\mathbf{R}^2$, we have $D(A)=H^1_0(\Omega)\cap H^2(\Omega)$. The adjoint operator $A^*$ of $A$, also defined on $D(A)$ with homogeneous Dirichlet conditions, is given by
$$
A^*v=-\sum_{i,j=1}^d\partial_{x_i}\left(a_{ij}(x)\partial_{x_j}v\right)-\sum_{i=1}^db_{i}(x)\partial_{x_i}v+\left(c-\sum_{i=1}^d\partial_{x_i}b_i\right)v
$$
and is also uniformly elliptic, see \cite[Definition Chapter 6]{MR2597943}. The operators $A$ and $A^*$ do not depend on $t$ and have a constant of ellipticity $\theta>0$ (for $A$ written in \textit{non-divergence form}), i.e.:
$$
\sum_{i,j=1}^d a_{ij}(x) \xi_i\xi_j \geq \theta \vert \xi \vert^2 \qquad \forall x \in \Omega.
$$
Moreover, we assume that
\begin{equation}\label{ineq_theta}
	\theta > \theta_1
\end{equation}
where $\theta_1$ is the largest root of the polynomial $P(X) = \frac{X^2}{4\min(1,C_p)} - \Vert c \Vert_{L^{\infty}(\Omega)} X - 
 \frac{\sum_{i=1}^d\Vert b_i\Vert_{L^{\infty}(\Omega)}}{2}$
with $C_p$ the Poincar\'e constant on $\Omega$. This assumption is used to ensure that an energy inequality is satisfied with constants not depending on the final time $T$ (see \ref{sec_app} for details).

We assume throughout that $A$ satisfies the classical maximum principle (see \cite[sec. 6.4]{MR2597943}) and that $c^*=c-\sum_{i=1}^d\partial_{x_i}b_i \in C^2(\Omega)$. 

Let $(\lambda_j, \phi_j)_{j \in \mathbf{N}^*}$ be the eigenelements of $A$ with $(\phi_j)_{j\in\mathbf{N}^*}$ an orthonormal eigenbasis of $L^2(\Omega)$:
\begin{itemize}
\item $ \forall j \in \mathbf{N}^{*},\qquad A \phi_{j} = \lambda_{j}\phi_{j}, \qquad \phi_{j_{\vert\partial\Omega}}=0$
\item $ \forall j \in \mathbf{N}^{*},\, j>1, \qquad \lambda_{1}< \lambda_{j} \leqslant \lambda_{j+1}, \qquad \lambda_{j}\rightarrow +\infty$.
\end{itemize}
A typical example satisfying all assumptions above is the Dirichlet Laplacian, which we will consider in our numerical simulations.

We recall that the Hausdorff distance between two compact subsets $K_1, K_2$ of $\mathbf{R}^d$ is defined by
$$
d_{\mathcal{H}}(K_1,K_2) = \sup\Big(\sup_{x\in K_2} d_{K_1}(x),\sup_{x\in K_1} d_{K_2}(x) \Big).
$$

\subsection{Setting}
Hereafter, we identify any measurable subset $\omega$ of $\Omega$ with its characteristic function $\chi_\omega$. Let $L \in (0,1)$. We define the set of admissible shapes
\begin{equation*}
\mathcal{U}_L = \{\omega \subset \Omega \mbox{ measurable } \mid \, \vert \omega \vert \leq L \vert \Omega \vert \}.
\end{equation*}

\paragraph{Dynamical optimal shape design problem \osd} 
Let $y_{0} \in L^{2}(\Omega)$ and let $\gamma_1 \geq 0, \gamma_2 \geq 0$ be arbitrary.
We consider the parabolic equation controlled by a (measurable) time-varying map $t\mapsto\omega(t)$ of subdomains
\begin{equation}
\partial_t y + A y = \chi_{\omega(\cdot)}, \qquad y_{\vert \partial \Omega}=0, \qquad y(0) = y_{0}.
\label{heat}
\end{equation}
Given $T>0$ and $y_d \in L^{2}(\Omega)$, we consider the dynamical optimal shape design problem \osd\, of determining a measurable path of shapes $t\mapsto \omega(t)\in \mathcal{U}_L$ that minimizes the  cost functional
\begin{equation*}
J_{T}(\omega(\cdot)) = \frac{\gamma_1}{2T}\int_{0}^{T}\Vert y(t)-y_{d}\Vert^{2}\,dt  + \frac{\gamma_2}{2}\,\Vert y(T) - y_d\Vert^2
\label{cost}
\end{equation*}
where $y=y(t,x)$ is the solution of (\ref{heat}) corresponding to $\omega(\cdot)$.

\paragraph{Static optimal shape design problem}
Besides, for the same target function $y_d \in L^2(\Omega)$, we consider the following associated static shape design problem:
\begin{equation}\tag{\ssd}
\displaystyle{\min_{\omega \in \mathcal{U}_L} \frac{\gamma_1}{2} \Vert y-y_{d}\Vert^{2}}, \quad A y =\chi_{\omega}, \quad y_{\vert \partial \Omega}=0.
\label{static}
\end{equation}
We are going to compare the solutions of \osd\, and of (\ssd)\, when $T$ is large.

\subsection{Preliminaries}
\paragraph{Convexification}
Given any measurable subset $\omega\subset\Omega$, we identify $\omega$ with its characteristic function $\chi_\omega\in L^\infty(\Omega;\{0,1\})$ and we identify $\mathcal{U}_L$ with a subset of $L^\infty(\Omega)$ (as in \cite{MR2745777, MR3325779, MR3500831}). Then, the convex closure of $\mathcal{U}_L$ in $L^\infty$ weak star topology is
$$
\overline{\mathcal{U}}_L = \Big\{ a \in L^{\infty}\big(\Omega;[0,1]\big)\ \mid\  \int_{\Omega}a(x)\,dx \leq L\vert\Omega\vert \Big\}
$$
which is also weak star compact. We define the \emph{convexified} (or \emph{relaxed}) optimal control problem \ocp\, of determining a control $t\mapsto a(t)\in \overline{\mathcal{U}}_L$ minimizing the cost 
$$
J_T(a)= \frac{\gamma_1}{2T}\int_{0}^{T}\Vert y(t)-y_{d}\Vert^{2}\,dt + \frac{\gamma_2}{2}\,\Vert y(T) - y_d\Vert^2 
$$
under the constraints
\begin{equation}
\partial_t y +A y = a, \qquad y_{\vert \partial \Omega}=0, \qquad y(0) = y_{0}. 
\label{heat_convex}
\end{equation}
The corresponding convexified static optimization problem is
\begin{equation}\tag{\sop}
\min_{a \in \overline{\mathcal{U}}_L}  \frac{\gamma_1}{2} \Vert y-y_{d}\Vert^{2}, \qquad
A y = a, \qquad y_{\vert \partial \Omega}=0.
\label{static_convex}
\end{equation}

Note that the control $a$ does not appear in the cost functionals of the above convexified control problems. Therefore the resulting optimal control problems are affine with respect to $a$. Once we have proved that an optimal solution $a \in \overline{\mathcal{U}}_L$ exists, we expect that any such minimizer will be an element of the set of extremal points of the compact convex set $\overline{\mathcal{U}}_L$, which is exactly the set $\mathcal{U}_L$ (since $\omega$ is identified with its characteristic function $\chi_{\omega}$). If this is true, then actually $a=\chi_\omega$ with $\omega\in\mathcal{U}_L$. Here, as it is usual in shape optimization, the interest of passing by the convexified problem is to allow us to derive optimality conditions, and thus to characterize the optimal solution. It is anyway not always the case that the minimizer $a$ of the convexified problem is an extremal point of $\overline{\mathcal{U}}_L$ (i.e., a characteristic function): in this case, we speak of a \emph{relaxation phenomenon}. Our analysis hereafter follows these guidelines. 

Taking a minimizing sequence and by classical arguments of functional analysis (see, e.g., \cite{MR0271512}), it is straightforward to prove existence of solutions $a_T$ and $\bar{a}$ respectively of \ocp\ and of (\sop)\, (see details in Section \ref{sec31}).

We underline the following fact: \textbf{if} $\bar a$ and $a_T(t),\mbox{ for } a.e.\, t\in[0,T]$, are characteristic functions of some subsets (meaning that $\bar a=\chi_{\bar\omega} \mbox{ with } \bar\omega\in\mathcal{U}_L$ and $\mbox{ for } a.e.\, t \in (0,T), a_T(t) = \chi_{\omega_T(t)} \mbox{ with } \omega_T(t)\in\mathcal{U}_L$), \textbf{then} actually $t\mapsto\omega_T(t)$ and $\bar\omega$ are optimal shapes, solutions respectively of \osd\, and of (\ssd).

Our next task is to apply necessary optimality conditions to optimal solutions of the convexified problems stated in \cite[Chapters 2 and 3]{MR0271512} or \cite[Chapter 4]{MR1312364} and infer from these necessary conditions that, under appropriate assumptions, the optimal controls are indeed characteristic functions.

\paragraph{Necessary optimality conditions for \ocp} 
According to the Pontryagin maximum principle (see \cite[Chapter 3, Theorem 2.1]{MR0271512}, see also \cite{MR1312364}), for any optimal solution $(y_T,a_T)$ of \ocp\ there exists an adjoint state $p_T \in L^{2}(0,T;\Omega)$ such that
\begin{align}
	&\begin{array}{r}
	\partial_t y_{T} + A y_{T} = a_{T},~ y_{T_{\vert \partial \Omega}}=0,~ y_{T}(0) = y_{0} \\[0.3cm]
	\hspace{-0.5cm}\partial_t p_{T} - A^*\!p_{T} \!=\! \gamma_1(y_T\!-\!y_d),~ p_{T_{\vert \partial \Omega}}\!=\!0, ~p_{T}(T) \!=\! \gamma_2 \big(y_T(T)\!-\!y_d\big)
	\label{OCocp} 
	\end{array} \\[0.2cm]
	\label{optim}
	&\forall a \in \overline{\mathcal{U}}_L, \textrm{for a.e.}\ t \in [0,T] : \quad \big(p_{T}(t),a_{T}(t)-a\big) \geq 0.
\end{align}

\paragraph{Necessary optimality conditions for (\sop)} 
Similarly, applying \cite[Chapter 2, Theorem 1.4]{MR0271512}, for any optimal solution $(\bar y,\bar a)$ of (\sop)\ there exists an adjoint state $\bar{p} \in L^{2}(\Omega)$ such that
\begin{eqnarray}
	\begin{array}{rcl}
	\hspace{1cm}A \bar{y} = \bar{a},&~& \bar{y}_{\vert \partial \Omega}=0 \\[0.2cm]
	\hspace{1cm}-A^* \bar{p} = \gamma_1(\bar{y}-y_d),&~& \bar{p}_{\vert \partial \Omega}=0
	\end{array}
	\label{OCsop}\\
	\forall a \in \overline{\mathcal{U}}_L : \quad (\bar{p},\bar{a} - a) \geq 0.
	\label{optimstat}
\end{eqnarray}
Using the bathtub principle (see, e.g., \cite[Theorem 1.14]{MR1817225}), (\ref{optim}) and (\ref{optimstat}) give
\begin{eqnarray}
\hspace{1cm}a_T(\cdot) &=& \chi_{\{p_T(\cdot) > s_T(\cdot)\}} + c_T(\cdot)\chi_{\{p_T(\cdot) = s_T(\cdot)\}}
\label{optimchi} \\
\hspace{1cm}\bar{a} &=& \chi_{\{\bar{p} > \bar{s}\}} + \bar{c}\chi_{\{\bar{p} = \bar{s}\}}
\label{optimstatchi}
\end{eqnarray} 
with, for a.e. $t\in[0,T]$,
\begin{eqnarray}
&c_T(t)& \!\!\!\!\in L^{\infty}(\Omega;[0,1]) \mbox{ and } \bar{c} \in L^{\infty}(\Omega;[0,1]) \\
&s_T(\cdot)& \!\!\!\!= \inf\big\{\sigma\in\mathbf{R}\ \mid\ \vert \{p_T(\cdot)>\sigma\} \vert  \leq L\vert \Omega \vert \big\} \label{leveltime} \\
&\bar{s}& \!\!\!\!= \inf\big\{\sigma\in\mathbf{R}\ \mid\ \vert\{\bar{p}>\sigma\}\vert  \leq L\vert \Omega \vert \big\}. \label{levelstat}
\end{eqnarray}

\subsection{Main results}
\paragraph{Existence of optimal shapes}
Proving existence of optimal shapes, solutions of \osd\, and of (\ssd), is not an easy task. Indeed, relaxation phenomena may occur, i.e., classical designs in $\mathcal{U}_L$ may not exist but may develop homogeneization patterns (see \cite[Sec. 4.2, Example 3]{henrot2005variation}). Therefore, some assumptions are required on the target function $y_d$ to establish existence of optimal shapes. We define:
\begin{itemize}
\item $y^{T,0} \mbox{ and } y^{T,1}$, the solutions of (\ref{heat_convex}) corresponding respectively to $a=0$ and $a=1$;
\item $y^{s,0} \mbox{ and } y^{s,1}$, the solutions of: $Ay=a,y_{\vert\partial\Omega}=0$, corresponding respectively to $a=0$ and $a=1$;
\item $\displaystyle{y^0 = \min \Big(y^{s,0}, \min_{t \in (0,T)} y^{T,0}(t)\Big)}$ and $ \displaystyle{y^1 = \max \Big(y^{s,1},\max_{t \in (0,T) }y^{T,1}\Big)}$.
\end{itemize}
We recall that $A$ is said to be analytic-hypoelliptic in the open set $\Omega$ if any solution of $Au=v$ with $v$ analytic in $\Omega$ is also analytic in $\Omega$. Analytic-hypoellipticity is satisfied for the second-order elliptic operator $A$ as soon as its coefficients are analytic in $\Omega$ (for instance it is the case for the Dirichlet Laplacian, without any further assumption, see \cite{Nelson}).
\begin{thm}
We distinguish between Lagrange and Mayer cases.
\begin{enumerate}
\item $\gamma_1=0, \gamma_2=1$ (Mayer case):
If $A$ is analytic-hypoelliptic in $\Omega$ then there exists a unique optimal shape $\omega_T$, solution of \osd.
\item $\gamma_1=1, \gamma_2=0$ (Lagrange case): Assuming that $y_0 \in D(A)$ 
and that $y_d \in H^2(\Omega)$:
\begin{enumerate}[(i)]
	\item If $y_d<y^0$ or $y_d>y^1$ then there exist unique optimal shapes $\bar\omega$ and $\omega_T$, respectively, of (\ssd)\, and of \osd.
	\item There exists a function $\beta$ such that if $A y_d \leq \beta$, then there exists a unique optimal shape $\bar \omega$, solution of (\ssd).
\end{enumerate}
\end{enumerate}
\label{existencethm}
\end{thm}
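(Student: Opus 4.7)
The plan is to work entirely at the convexified level, proving existence of relaxed minimizers $a_T \in \overline{\mathcal{U}}_L$ of \ocp\ and $\bar a \in \overline{\mathcal{U}}_L$ of (\sop), and then showing, under each hypothesis of the theorem, that these minimizers are characteristic functions of measurable subsets. By the observation preceding the theorem this yields optimal shapes $\omega_T$ and $\bar\omega$, and uniqueness follows from the uniqueness in the bathtub decompositions \eqref{optimchi}--\eqref{optimstatchi} as soon as the corresponding level sets $\{p_T(t,\cdot)=s_T(t)\}$ and $\{\bar p=\bar s\}$ are shown to have zero Lebesgue measure.

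Existence at the relaxed level proceeds via the direct method. Given a minimizing sequence $(a_n) \subset \overline{\mathcal{U}}_L$, weak-$*$ compactness of $\overline{\mathcal{U}}_L$ provides a limit $a_\infty$. The $T$-independent energy inequality of the appendix---which is precisely why the assumption $\theta>\theta_1$ was imposed---combined with an Aubin--Lions argument upgrades the convergence of the associated states to strong convergence in $L^2$, so the quadratic cost passes to the limit by lower semicontinuity and $a_\infty$ is a minimizer; the same argument applies verbatim to (\sop). Strict convexity of the cost in $y$ together with well-posedness of the state and adjoint equations then yields uniqueness of $y_T,p_T$ (respectively $\bar y,\bar p$), so that only the bang-bang reduction remains.

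For the Mayer case ($\gamma_1=0,\gamma_2=1$), the adjoint $p_T$ satisfies the homogeneous backward parabolic equation $\partial_t p_T - A^* p_T = 0$ with zero Dirichlet boundary condition and terminal datum $y_T(T)-y_d\in L^2(\Omega)$. A time reversal turns this into a well-posed forward parabolic problem whose semigroup is smoothing, and analytic-hypoellipticity of $A^*$ implies that $p_T(t,\cdot)$ is real-analytic in $\Omega$ for every $t<T$. A non-constant real-analytic function on a connected open set has level sets of measure zero; the Dirichlet condition forces the only admissible constant value to be $0$, which would entail $y_T(T)=y_d$, a degenerate case easily excluded. Hence $|\{p_T(t,\cdot)=s_T(t)\}|=0$ for a.e.\ $t$, and $\omega_T$ exists and is unique.

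For the Lagrange case, the argument is driven by sign information on the adjoint. Under (i) with $y_d<y^0$, the parabolic maximum principle applied to the state equation with non-negative source $a_T\in[0,1]$ yields $y_T(t,\cdot)\geq y^{T,0}(t,\cdot)\geq y^0>y_d$ pointwise, so the right-hand side of $\partial_t p_T - A^* p_T = y_T-y_d$ is a strictly signed $L^\infty$ function, and a backward parabolic maximum principle forces $p_T$ itself to have a strict sign in $(0,T)\times\Omega$. The assumptions $y_0\in D(A)$ and $y_d\in H^2(\Omega)$ allow a regularity bootstrap from which a Stampacchia-type argument shows that on any positive-measure level set the derivatives of $p_T$ would vanish, a situation incompatible with the PDE since $y_T-y_d\not\equiv 0$; the case $y_d>y^1$ and the static counterpart in (i) are symmetric. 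For (ii), the function $\beta$ is constructed precisely so that $Ay_d\leq\beta$ forces $\bar y-y_d$ to have a constant sign (by comparison for $A\bar y=\bar a$ with $\bar a\in[0,1]$), and the same reasoning then applies to $\bar p$. The main obstacle is precisely this final step: analytic-hypoellipticity handles the Mayer case cleanly, but in the Lagrange case the adjoint inherits only the regularity of $y_T-y_d$, and excluding positive-measure level sets of $p_T$ requires carefully combining the sign information with elliptic regularity and unique continuation.
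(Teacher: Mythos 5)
Your relaxed-existence step (direct method, weak-$*$ compactness, Aubin--Lions, lower semicontinuity), your treatment of the Mayer case via analyticity of $p_T$ and nullity of level sets of a non-constant analytic function, and your uniqueness argument via strict convexity all match the paper. The genuine gap is in the Lagrange case. For (i) you try to prove that the coincidence sets $\{\bar p=\bar s\}$ and $\{p_T(t,\cdot)=s_T(t)\}$ have zero Lebesgue measure, but the strict sign of $p_T$ obtained from the maximum principle does not preclude a level set of positive measure, and your ``Stampacchia-type'' step only yields (via Lemma \ref{derivativenullset}, applied twice so that first and second spatial derivatives vanish a.e.\ on the set) the identity $A^*\bar p=c^*\bar s$, hence $y_d-\bar y=c^*\bar s$ there; with no sign assumption on $c^*$ this is not a contradiction with $\bar y-y_d>0$, and in the time-dependent problem the term $\partial_t p_T$ on the moving level set is not controlled by that lemma at all, so ``incompatible with the PDE'' does not follow. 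The paper does not show the level set is null in case (i). It uses a trimming argument instead: replace $\bar a$ by the control $a^*$ equal to the extreme value $0$ (resp.\ $1$) on $\{\bar p=\bar s\}$; since $y_d<y^0$ guarantees that every admissible state lies above $y_d$, the elliptic (resp.\ parabolic) maximum principle applied to $\bar y-y^*$ gives $y_d\le y^*\le\bar y$, so $\Vert y^*-y_d\Vert\le\Vert\bar y-y_d\Vert$ and $a^*$ is also optimal; uniqueness of the minimizer then forces $\bar c=0$ a.e.\ on the coincidence set, i.e.\ $\bar a$ was already a characteristic function. You would need to adopt this mechanism (or supply the missing contradiction) for both the static and the dynamical problems.

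Your account of (ii) also misstates the role of $\beta$: the condition $Ay_d\le\beta$ does not force a sign on $\bar y-y_d$. The paper takes $\beta=\bar s\,Ac^*$ and argues by contradiction: if $\vert\{\bar p=\bar s\}\vert>0$, then Lemma \ref{derivativenullset} gives $A^*\bar p=c^*\bar s$ there, hence $y_d-\bar y=c^*\bar s$, and applying $A$ to this identity (using the lemma once more) yields $Ay_d-\bar s\,Ac^*=A\bar y=\bar a\in(0,1)$ on that set, contradicting $Ay_d\le\beta$. In other words, (ii) is precisely where the measure-zero mechanism you wanted to use in (i) actually works, and it works only because of this extra hypothesis on $Ay_d$, not because of a comparison argument on the state.
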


Proofs are given in Section \ref{sec:proof}. To prove existence of optimal shapes, we deal first with the convexified problems  \ocp\, and (\sop)\, and show existence and uniqueness of solutions. Hereafter, using optimality   conditions \eqref{OCocp}-\eqref{OCsop} and under the assumptions given in Theorem    \ref{existencethm} we can write the optimal control as characteristic functions of upper level sets of the adjoint variable. In the static case, for example, one key observation is to note that, if $\vert \big\{\bar{p} = \bar{s}\big\} \vert = 0$, then it follows from (\ref{optimstatchi}) that the static optimal control $\bar a$ is actually the characteristic function of a shape $\bar{\omega} \in \mathcal{U}_L$. This proves the existence of an optimal shape. 
\begin{rem}
Note that in the Mayer case ($\gamma_1 = 0,\gamma_2>0$), (\ssd)\, is reduced to solve $Ay=\chi_{\omega}$, $y_{\vert\partial\Omega}=0$. There is no criterion to minimize.
\end{rem}
\begin{rem}
Theorem \ref{existencethm} guarantees the uniqueness of an optimal shape. We deduce from the inequality \eqref{energy} in the appendix that we also have the uniqueness of the corresponding  state and adjoint state. Thus we have uniqueness for both the dynamic and the static optimal triple.
\end{rem}
In what follows, we study the behavior of optimal solutions of \osd\, compared to those of (\ssd)\, and give some turnpike properties. In the Lagrange case, inspired by \cite{MR3124890}, \cite{MR3616131} and \cite{measureturnpikeTZ}, we first prove that state and adjoint satisfy integral and measure turnpike properties. In the Mayer case, we estimate the Hausdorff distance between dynamical and static optimal shapes and show an exponential turnpike property. We denote by : 
\begin{itemize}
	\item $(y_T,p_T,\omega_T)$ the optimal triple of \osd\, and
	$$\displaystyle{J_T = \frac{\gamma_1}{2T}\int_{0}^{T}\Vert y_T(t)-y_{d}\Vert^{2}\,dt}+\frac{\gamma_2}{2}\Vert y_T(T)-y_{d}\Vert^{2};$$
	\item $(\bar{y},\bar{p},\bar{\omega})$ the optimal triple of (\ssd)\, and $\bar{J} = \frac{\gamma_1}{2}\Vert \bar{y}-y_{d}\Vert^{2}.$
\end{itemize}

\paragraph{Integral turnpike in the Lagrange case}
\begin{thm} For $\gamma_1=1, \gamma_2=0$ (Lagrange case), there exists $M>0$ (independent of the final time $T$) such that
$$
\int_0^T \big( \Vert y_T(t)-\bar{y} \Vert^2 + \Vert p_T(t)-\bar{p} \Vert^2 \big) \,dt \leq M\qquad \forall T>0.
$$
\label{integralturnpikethm}
\end{thm}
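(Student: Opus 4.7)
The plan is a direct strict-dissipativity computation for the shifted state $z := y_T - \bar y$ and shifted adjoint $q := p_T - \bar p$. Subtracting \eqref{OCsop} from \eqref{OCocp} (and using $\gamma_2 = 0$, so that $p_T(T) = 0$), these satisfy
\begin{align*}
\partial_t z + A z &= a_T - \bar a, \quad z_{|\partial\Omega}=0, \quad z(0) = y_0 - \bar y, \\
\partial_t q - A^* q &= \gamma_1 z, \quad q_{|\partial\Omega}=0, \quad q(T) = -\bar p.
\end{align*}
Adding \eqref{optim} written with the test $a = \bar a$ to \eqref{optimstat} written with $a = a_T(t)$ yields the essential sign condition $(q(t), a_T(t) - \bar a) \geq 0$ for a.e.\ $t \in [0,T]$.

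Next, I would differentiate the cross pairing $V(t) := (q(t), z(t))$. The terms $(A^* q, z)$ and $(q, A z)$ cancel by the definition of the adjoint and the homogeneous Dirichlet traces of $z$ and $q$, leaving
\[
\frac{d}{dt} V(t) = \gamma_1 \|z(t)\|^2 + (q(t), a_T(t) - \bar a) \geq \gamma_1 \|z(t)\|^2.
\]
Integrating on $[0, T]$ and inserting $q(T) = -\bar p$, $z(0) = y_0 - \bar y$:
\[
\gamma_1 \int_0^T \|z(t)\|^2\, dt \leq -(\bar p, z(T)) - (q(0), y_0 - \bar y).
\]

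It only remains to bound the two right-hand side terms uniformly in $T$. The spectral hypothesis \eqref{ineq_theta} is introduced precisely so that the appendix energy estimate \eqref{energy} holds with constants independent of the final time. Since $0 \leq a_T \leq 1$ with $\int_\Omega a_T(t)\leq L|\Omega|$, one has $\|a_T(t)\| \leq \sqrt{L|\Omega|}$ uniformly, and \eqref{energy} applied to the forward equation for $y_T$ yields $\|y_T\|_{L^\infty(0,T;L^2(\Omega))} \leq C$, hence $\|z(T)\| \leq C$. Applied to the backward equation for $p_T$ (whose source $\gamma_1(y_T - y_d)$ is uniformly bounded in $L^\infty(0,T;L^2(\Omega))$ and whose terminal datum vanishes), the same estimate gives $\|p_T\|_{L^\infty(0,T;L^2(\Omega))} \leq C$, hence $\|q(0)\| \leq C$. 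This bounds $\int_0^T \|z(t)\|^2\, dt \leq M_1$ independently of $T$. A final application of \eqref{energy} to the equation for $q$ itself, whose source $\gamma_1 z$ is now uniformly bounded in $L^2(0,T;L^2(\Omega))$ and whose terminal datum is $-\bar p$, produces the matching bound $\int_0^T \|q(t)\|^2\, dt \leq M_2$.

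The main obstacle is the uniform-in-$T$ character of every energy estimate invoked: without the hypothesis \eqref{ineq_theta}, a Grönwall-type argument would produce constants growing exponentially with $T$ and the integral turnpike property would fail. Note also that the computation does not use strict dissipativity as an abstract hypothesis, but rather rediscovers it through the concrete storage function $V = (q,z)$, which will likely simplify the subsequent measure-turnpike discussion.
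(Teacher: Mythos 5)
Your proposal is correct and follows essentially the same route as the paper: the same shifted system, the same sign condition obtained by adding the two variational inequalities, and the same integration identity (your $\frac{d}{dt}(q,z)$ computation is exactly the paper's ``multiply by $\tilde p$ and $\tilde y$ and add''), followed by the same uniform-in-$T$ energy bounds from the appendix for both the boundary terms and the final adjoint estimate. No substantive differences to report.
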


\paragraph{Measure-turnpike in the Lagrange case}
\begin{defn}\label{defmeasureturnpike}
We say that $(y_T,p_T)$ satisfies the \emph{state-adjoint measure-turnpike property} if for every $\varepsilon > 0$ there exists $\Lambda(\varepsilon)>0$, independent of $T$, such that
$$
\vert P_{\varepsilon,T} \vert < \Lambda(\varepsilon) \qquad \forall T >0
$$
where  $P_{\varepsilon,T} = \big\{t \in [0,T]\ \mid\ \Vert y_T(t)-\bar{y} \Vert+\Vert p_T(t)-\bar{p} \Vert>\varepsilon \big\} $.
\end{defn}

We refer to \cite{MR3155340,MR3654613,measureturnpikeTZ} (and references therein) for similar definitions. Here, $P_{\varepsilon,T}$  is the set of times along which the time optimal state-adjoint pair $\big(y_T,p_T\big)$ remains outside of an $\varepsilon$-neighborhood of the static optimal state-adjoint pair $(\bar{y},\bar{p})$ in $L^2$ topology.

Recall that a $\mathcal{K}$-class function is a continuous monotone increasing function $\alpha: [0;+\infty) \mapsto [0;+\infty)$ with $\alpha(0) = 0$. We now recall the notion of dissipativity (see \cite{MR0527462}).
\begin{defn}
We say that \osd\, is \emph{strictly dissipative} at an optimal stationary point $(\bar{y},\bar{\omega})$ of (\ref{static}) with respect to the \emph{supply rate function} 
$$
w(y,\omega) = \frac{1}{2}\Big(\Vert y-y_d \Vert^2 - \Vert \bar{y}-y_d \Vert^2 \Big)
$$
if there exists a \emph{storage function} $S:E\rightarrow \mathbf{R}$ locally bounded and bounded below and a \emph{$\mathcal{K}$-class function} $\alpha(\cdot)$ such that, for any $T >0$ and any $0<\tau<T$, the strict dissipation inequality
\begin{equation}
S(y(\tau)) + \int_0^\tau \alpha(\Vert y(t) - \bar{y} \Vert )\,dt \leq
S(y(0)) + \int_0^\tau  w\big(y(t),\omega(t)\big)\,dt 
\label{ineqDISSIP}
\end{equation}
is satisfied for any pair $\big(y(\cdot), \omega(\cdot)\big)$ solution of (\ref{heat}). 
\label{definitiondissipativity}
\end{defn} 

\begin{thm} 
For $\gamma_1=1, \gamma_2=0$ (Lagrange case):
\begin{enumerate}[(i)]
\item  \osd\, is strictly dissipative in the sense of Definition \ref{definitiondissipativity}.
\item The state-adjoint pair $(y_T,p_T)$ satisfies the measure-turnpike property.
\end{enumerate}
\label{measureturnpikethm}
\end{thm}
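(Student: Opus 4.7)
The plan is to build the storage function from the static adjoint $\bar p$ and let the static Pontryagin inequality \eqref{optimstat} absorb the control-dependent term. I would take
\[
S(y) = -(\bar p, y),
\]
which is linear in $y$, hence locally bounded, and bounded below along any admissible pair $(y,\omega)$ because the energy inequality \eqref{energy} of the appendix bounds $\|y(t)\|$ by a constant $K$ depending only on $\|y_0\|$ and not on $T$ (so $S \geq -\|\bar p\|K$ along trajectories). For the $\mathcal{K}$-class function I would take $\alpha(r) = r^2/2$.

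The key computation is the following Lagrangian-type identity. Expand the supply rate via the parallelogram identity,
\[
w(y,\omega) = \tfrac{1}{2}\|y-\bar y\|^2 + (y-\bar y,\, \bar y - y_d),
\]
and rewrite the cross term using the static optimality system \eqref{OCsop}. Since $A^*\bar p = y_d - \bar y$ and $A\bar y = \chi_{\bar\omega}$, and since the dynamics gives $A(y-\bar y) = -\partial_t y + \chi_\omega - \chi_{\bar\omega}$, integration by parts (boundary terms vanish by the Dirichlet conditions) yields
\[
(y-\bar y,\, \bar y - y_d) = -\bigl(A(y-\bar y),\bar p\bigr) = (\partial_t y, \bar p) + (\bar p,\, \chi_{\bar\omega}-\chi_\omega).
\]
The last term is nonnegative by \eqref{optimstat} applied to the admissible $a = \chi_{\omega(t)} \in \overline{\mathcal U}_L$, and $(\partial_t y,\bar p) = \frac{d}{dt}(\bar p, y) = -\frac{d}{dt}S(y)$. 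Integrating from $0$ to $\tau$ and using the sign of the Pontryagin term gives
\[
S(y(\tau)) + \int_0^\tau \tfrac{1}{2}\|y(t)-\bar y\|^2\,dt \leq S(y(0)) + \int_0^\tau w(y(t),\omega(t))\,dt,
\]
which is the required strict dissipation inequality, proving (i).

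For (ii) I would invoke Theorem \ref{integralturnpikethm}, which gives a constant $M>0$ independent of $T$ with
\[
\int_0^T \bigl(\|y_T(t)-\bar y\|^2 + \|p_T(t)-\bar p\|^2\bigr)\,dt \leq M.
\]
A Chebyshev/Markov inequality then yields $|\{t : \|y_T(t)-\bar y\|>\varepsilon/2\}| \leq 4M/\varepsilon^2$ and similarly for $p_T$, so that, splitting the event $\|y_T-\bar y\| + \|p_T-\bar p\| > \varepsilon$ into the two contributions,
\[
|P_{\varepsilon,T}| \leq \frac{8M}{\varepsilon^2} =: \Lambda(\varepsilon),
\]
uniformly in $T$, which is the measure-turnpike property.

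The genuinely delicate point is the one in (i): verifying that the linear storage function is acceptable, since $y \mapsto -(\bar p, y)$ is unbounded below on $L^2(\Omega)$. The remedy is to interpret the dissipativity definition along admissible trajectories, where the $T$-uniform energy estimate \eqref{energy} (which is the reason assumption \eqref{ineq_theta} was imposed) confines states to a bounded set of $L^2(\Omega)$. Every other step — the Lagrangian identity, the use of \eqref{optimstat}, and Markov's inequality for (ii) — is straightforward once this framework is set.
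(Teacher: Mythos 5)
Your overall strategy for (i) is the paper's strategy (storage function built from the static adjoint, with the static Pontryagin inequality \eqref{optimstat} absorbing the control term), but there is a sign error in your choice of storage function, and it matters: the dissipation inequality you claim does not follow from your own computation. Your identity gives
\[
w(y,\omega)=\tfrac12\Vert y-\bar y\Vert^2+(\partial_t y,\bar p)+(\bar p,\chi_{\bar\omega}-\chi_{\omega})\ \geq\ \tfrac12\Vert y-\bar y\Vert^2+\frac{d}{dt}(\bar p,y),
\]
so integrating on $(0,\tau)$ yields
\[
(\bar p,y(\tau))+\int_0^\tau\tfrac12\Vert y(t)-\bar y\Vert^2\,dt\ \leq\ (\bar p,y(0))+\int_0^\tau w\,dt .
\]
This is \eqref{ineqDISSIP} with $S(y)=+(\bar p,y)$ (the paper's choice), not with $S(y)=-(\bar p,y)$. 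With your $S$, the same computation rearranges to $S(y(0))+\int_0^\tau\alpha\leq S(y(\tau))+\int_0^\tau w$, i.e.\ the endpoint terms land on the wrong sides of the inequality. The fix is one keystroke --- take $S(y)=(\bar p,y)$ --- and everything else you wrote (linearity, boundedness along trajectories via the $T$-uniform estimate \eqref{gronwall}, nonnegativity of the Pontryagin term) survives unchanged, since those properties are insensitive to the sign of $S$. Your remark that the unboundedness of a linear storage function must be resolved by restricting to the bounded set of admissible trajectories is exactly the paper's own justification.

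For (ii) you take a legitimately different and shorter route: you cite the already-established integral turnpike bound of Theorem \ref{integralturnpikethm} and conclude by Chebyshev, splitting $P_{\varepsilon,T}$ into the two events $\Vert y_T-\bar y\Vert>\varepsilon/2$ and $\Vert p_T-\bar p\Vert>\varepsilon/2$. This is correct and logically clean, since Theorem \ref{integralturnpikethm} is proved independently of dissipativity. The paper instead rederives the state bound from the dissipation inequality \eqref{dissipativityinequality} (comparing $J_T$ with the cost of the static shape used as a constant-in-time control) and then controls $\int\Vert p_T-\bar p\Vert^2$ through the energy inequality \eqref{energy} applied to $\zeta=p_T(T-\cdot)-\bar p$; that longer route is what exhibits measure-turnpike as a \emph{consequence of strict dissipativity}, which is the conceptual point of the theorem, but your argument establishes the stated conclusion just as well.
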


\paragraph{Exponential turnpike}
The exponential turnpike property is a stronger property and can be satisfied either by the state, by the adjoint or by the control or even by the three together. 
\begin{thm}
For $\gamma_1=0, \gamma_2=1$ (Mayer case): For $\Omega$ with $C^2$ boundary and $c=0$ there exist $T_0>0$, $M>0$ and $\mu>0$  such that, for every $T\geq T_0$,
\begin{equation*}
d_{\mathcal{H}}\big(\omega_T(t),\bar{\omega}\big) \leq M e^{-\mu(T-t)} \qquad  \forall t \in (0,T).
\end{equation*}
\label{turnpikeexpothm}
\end{thm}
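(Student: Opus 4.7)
The plan is to exploit the explicit structure of the Mayer adjoint. With $\gamma_1=0$ and $\gamma_2=1$, the system \eqref{OCocp} reduces to $\partial_t p_T = A^* p_T$ on $(0,T)$ with terminal condition $p_T(T) = y_T(T) - y_d$, so that
\[
p_T(t) = e^{-(T-t)A^*}\bigl(y_T(T) - y_d\bigr)\qquad \forall t\in[0,T].
\]
Writing $y_T(T) - y_d = \sum_{j\geq 1} c_j^T\,\phi_j^*$ in the biorthogonal eigenbasis associated with $A$ and $A^*$, the spectral gap $\lambda_2 - \lambda_1 > 0$ gives the $L^\infty$-expansion
\[
p_T(t) = c_1^T\, e^{-(T-t)\lambda_1}\,\phi_1^* + O\bigl(e^{-(T-t)\lambda_2}\bigr),
\]
valid uniformly on $\Omega$. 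The standing assumptions $c\equiv 0$ and $\partial\Omega\in C^2$ are used here to ensure that $\phi_j^*\in C^2(\overline{\Omega})$ with enough regularity to control the spectral tail, and to guarantee the Krein--Rutman sign property $\phi_1^*>0$ in $\Omega$.

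The second step is to invoke the bathtub representation \eqref{optimchi}: the optimal shape $\omega_T(t)$ is the super-level set $\{p_T(t)>s_T(t)\}$, with $s_T(t)$ adjusted so that $|\omega_T(t)|=L|\Omega|$. Since super-level sets are invariant under positive scaling, I may divide by $c_1^T e^{-(T-t)\lambda_1}$ and reduce to a super-level set of $p_T(t)/(c_1^T e^{-(T-t)\lambda_1})$, which by the previous expansion is $L^\infty$-close to $\phi_1^*$. The volume constraint then pins the rescaled threshold to the unique value $\bar\alpha$ satisfying $|\{\phi_1^*>\bar\alpha\}|=L|\Omega|$, so the natural turnpike shape is identified as $\bar\omega := \{\phi_1^*>\bar\alpha\}$.

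The passage from $L^\infty$ proximity of the rescaled adjoints to Hausdorff proximity of the super-level sets rests on a standard level-set stability lemma: if $\Vert f-g\Vert_{L^\infty}\leq \varepsilon$ and $g\in C^1$ satisfies $|\nabla g|\geq \eta>0$ on $\{g=\bar\alpha\}$, then $d_{\mathcal H}\bigl(\{f>\alpha'\},\{g>\bar\alpha\}\bigr)\leq C\varepsilon/\eta$ for any $\alpha'$ with $|\alpha'-\bar\alpha|\leq\varepsilon$. Non-degeneracy of $\phi_1^*$ on $\{\phi_1^*=\bar\alpha\}$ follows from the Hopf lemma (strict positivity and non-vanishing interior gradient off the critical set) combined with real-analyticity of $\phi_1^*$, which forbids the first eigenfunction from being constant on any open piece of the level set. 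Combining with the decay rate $e^{-(T-t)(\lambda_2-\lambda_1)}$ from the first step yields the required Hausdorff estimate with $\mu = \lambda_2-\lambda_1$.

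The principal obstacle is to show that the first spectral coefficient $c_1^T = (y_T(T)-y_d,\phi_1)$ stays bounded below in absolute value, uniformly in $T\geq T_0$; otherwise the rescaling above is meaningless and the identification of $\bar\omega$ breaks down. I would handle this by contradiction and compactness: if $c_1^T\to 0$ along a subsequence, the energy inequality from the appendix combined with the spectral expansion forces $p_T$ to vanish in $L^\infty$ too fast, making the extremality condition \eqref{optim} non-informative; one then constructs an admissible competitor whose terminal state has strictly larger projection onto $\phi_1$, thereby strictly decreasing the Mayer cost and contradicting optimality of $\omega_T$. A careful version of the same comparison also determines the sign of $c_1^T$, and hence the orientation of the super-level sets.
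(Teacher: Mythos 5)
Your architecture coincides with the paper's: solve the backward adjoint equation explicitly in the eigenbasis of $A^*$, isolate the slowest-decaying mode, represent $\omega_T(t)$ as a super-level set of $p_T(t)$ via the bathtub principle, estimate the threshold $s_T(t)$ through the volume constraint, and convert $L^\infty$-closeness of the (rescaled) adjoints into Hausdorff closeness of their super-level sets by a quantitative level-set stability lemma whose non-degeneracy hypothesis is supplied by the Hopf lemma. The spectral-gap rate you obtain is the analogue of the paper's $\mu-\lambda$.

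There is, however, a genuine gap exactly where you locate "the principal obstacle", and the resolution you sketch does not work. You normalize by the first coefficient $c_1^T=(y_T(T)-y_d,\phi_1)$ and therefore need $|c_1^T|\geq\delta>0$ uniformly in $T$; without this the error term $O(e^{-(T-t)\lambda_2})$ is not small relative to the leading term and the identification of $\bar\omega=\{\phi_1^*>\bar\alpha\}$ collapses. Your proposed contradiction argument hinges on producing a competitor whose terminal state has "strictly larger projection onto $\phi_1$, thereby strictly decreasing the Mayer cost" --- but the Mayer cost is $\frac12\Vert y(T)-y_d\Vert^2$, the sum of the squares of \emph{all} Fourier coefficients of $y(T)-y_d$; enlarging one coefficient in absolute value can only increase that sum, and there is no reason why optimality should force the first coefficient to stay away from zero (indeed $c_1^T=0$ is perfectly compatible with optimality, in which case your rescaling is undefined). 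The paper avoids this issue entirely: it sets $j_0=\inf\{j:\zeta_j\neq 0\}$, takes $\Phi_0$ to be the projection of $y_d-y_T(T)$ onto the $\lambda_{j_0}$-eigenspace, and exploits the fact that super-level sets of $p_T(t)$ are insensitive to the size of the nonzero leading factor $e^{-\lambda_{j_0}(T-t)}$, so no uniform lower bound on the leading coefficient is ever required. To repair your proof you should either adopt that device (working with the first non-vanishing mode rather than insisting on $j=1$), or add a hypothesis guaranteeing $c_1^T\neq 0$ with a $T$-uniform lower bound; as written, the final step is not a proof.
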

In the Lagrange case, based on the numerical simulations presented in Section \ref{sec:Numerical simulations} we conjecture the exponential turnpike property, i.e., given optimal triples $(y_T,p_T,\omega_T)$ and $(\bar{y}, \bar{p}, \bar{\omega})$, there exist $C_1>0$ and $C_2>0$ independent of $T$ such that
\begin{equation*}
\Vert y_T(t)-\bar{y} \Vert +\Vert p_T(t)-\bar{p} \Vert+\Vert \chi_{\omega_T(t)}-\chi_{\bar{\omega}} \Vert \leq C_1 \Big(e^{-C_2 t} + e^{-C_2 (T-t)} \Big)
\end{equation*}
for a.e. $t \in [0,T]$.

\section{Proofs} \label{sec:proof}
\subsection{Proof of Theorem \ref{existencethm}} \label{sec31}
We first show existence of an optimal shape, solution for \ocp\, and similarly for (\sop). We first see that the infimum exists. We take a minimizing sequence $(y_{n},a_{n}) \in L^{2}(0,T;H^{1}_{0}(\Omega)) \times L^{\infty}\big(0,T;L^2\big(\Omega,[0,1]\big)\big)$ such that, for $n \in \mathbf{N}$, for $a.e.\, t \in[0,T], a_n(t) \in \overline{\mathcal{U}}_L$, the pair $(y_{n},a_{n})$ satisfies \eqref{heat_convex} and $J_T(a_n) \rightarrow J_T$. The sequence $(a_{n})$ is bounded in $L^{\infty}\big(0,T;L^2\big(\Omega,[0,1]\big)\big)$, so using \eqref{energy} and \eqref{gronwall}, the sequence $(y_{n})$ is bounded in $L^{\infty}(0,T;L^{2}(\Omega)) \cap L^{2}(0,T;H^{1}_{0}(\Omega))$. We show then, using \eqref{heat_convex}, that the sequence $(\frac{\partial y_{n}}{\partial t})$ is bounded in $L^{2}(0,T;H^{-1}(\Omega))$. We subtract a sequence still denoted by $(y_{n},a_{n})$ such that one can find a pair $(y,a) \in L^{2}(0,T;H^{1}_{0}(\Omega)) \times L^{\infty}\big(0,T;L^2\big(\Omega,[0,1]\big)\big)$ with 
\begin{eqnarray}
y_{n} &\rightharpoonup & y \,\,\,\qquad \text{weakly in }L^{2}(0,T;H^{1}_{0}(\Omega)) \nonumber\\
\partial_t y_{n} &\rightharpoonup &\partial_t y \,\,\,\quad \text{weakly in } L^{2}(0,T;H^{-1}(\Omega)) \nonumber \\
a_{n} &\rightharpoonup &a \qquad \text{weakly * in } L^{\infty}\big(0,T;L^2\big(\Omega,[0,1]\big)\big).
\label{weakstarcv}
\end{eqnarray}
We deduce that 
\begin{equation}
\begin{array}{rll}
\partial_t y_{n} +A y_{n} - a_{n} &\rightarrow & \partial_t y +A y - a \quad \text{in } \mathcal{D}'\big((0,T)\times\Omega\big) \\[0.2cm]
y_{n}(0) &\rightharpoonup & y(0) \quad ~\text{weakly in } L^{2}(\Omega). 
\end{array} 
\label{admissiblepair}
\end{equation}
We get using \eqref{admissiblepair} that $(y,a)$ is a weak solution of \eqref{heat_convex}.  Moreover, since $L^{\infty}\big(0,T;L^2\big(\Omega,[0,1]\big)\big) = \Big(L^{1}\big(0,T;L^2\big(\Omega,[0,1]\big)\big)\Big)'$ (see \cite[Corollary 1.3.22]{MR3617205}) the convergence \eqref{weakstarcv} implies that for every $v\in L^1(0,T)$ satisfying $v\geq 0$ and $\Vert v \Vert_{L^1(0,T)} = 1$, we have 
$\int_0^T \Big(\int_{\Omega} a(t,x)\,dx \Big)v(t)\,dt \leq L\vert\Omega\vert.$
Since the function $f_a$ defined by $f_a(t)=\int_{\Omega} a(t,x)\,dx$ belongs to $L^{\infty}(0,T)$, the norm $\Vert f_a \Vert_{L^{\infty}(0,T)}$ is the supremum of $\int_0^T \Big(\int_{\Omega} a(t,x)\,dx \Big)v(t)\,dt$ over the set of all possible $v \in L^{1}(0,T)$ such that $\Vert v\Vert_{L^{1}(0,T)} = 1$. Therefore $\Vert f_a \Vert_{L^{\infty}(0,T)} \leq L\vert\Omega\vert$ and $\int_{\Omega} a(t,x)\,dx \leq L\vert\Omega\vert$ for a.e. $t \in (0,T)$.
This shows that the pair $(y,a)$ is admissible. Since $H^{1}_{0}(\Omega)$ is compactly embedded in $L^{2}(\Omega)$ and by using the Aubin-Lions compactness Lemma (see \cite{aubinlions}), we obtain
$$
y_{n}  \rightarrow  y \quad \text{strongly in }L^{2}(0,T;L^{2}(\Omega)).
$$
We get then by weak lower semi-continuity of $J_T$ and by Fatou Lemma that
$$
J_T(a) \leq \lim \inf J_T(a_n).
$$
Hence $a$ is an optimal control for \ocp, that we rather denote by $a_{T}$ (and $\bar{a}$ for (\sop)). We next proceed by proving existence of optimal shape designs.

\textit{1-} We take $\gamma_1=0, \gamma_2=1$ (Mayer case). We consider an optimal triple $(y_T,p_T,a_T)$ of \ocp. Then it satisfies \eqref{OCocp} and \eqref{optimchi}. It follows from the properties of the parabolic equation and from the assumption of analytic-hypoellipticity that $p_T$ is analytic on $(0,T) \times \Omega$ and that all level sets $\{ p_T(t) = \alpha \}$ have zero Lebesgue measure. We conclude that the optimal control $a_T$ satisfying \eqref{OCocp}-\eqref{optimchi} is such that
\begin{equation}
\textrm{for a.e.} \ t \in [0,T]\quad \exists s(t) \in \mathbf{R}, \quad a_{T}(t,\cdot) = \chi_{\{p_{T}(t) > s(t)\}}
\label{aoptimal}
\end{equation}
i.e., $a_{T}(t)$ is a characteristic function.
Hence, for a Mayer problem \osd, existence of an optimal shape is proved. 

\textit{2-(i)} In the case $\gamma_1=1, \gamma_2=0$ (Lagrange case), we give the proof for the static problem (\ssd).
We suppose $y_d < y^0$ (we proceed similarly for $y_d > y^1$).
Having in mind (\ref{OCsop}) and (\ref{optimstatchi}), we have $A \bar{y} = \bar{c} \mbox{ on } \{ \bar{p} = \bar{s} \}$. 
By contradiction, if $\bar{c} \leq 1 \mbox{ on } \{ \bar{p} = \bar{s} \}$, let us consider the solution $y^*$ of: $Ay^*=a^*,y^*_{\vert\partial\Omega}=0$, with the control $a^*$ which is the same as $\bar{a}$ verifying (\ref{optimstatchi}) except that $\bar{c} = 0$ ($\bar{c} = 1$ if $y_d > y^1$) on $\{ \bar{p} = \bar{s} \}$. We have then $A(\bar{y}-y^*) \leq 0 $ (or $A(\bar{y}-y^*) \geq 0$ if $y_d > y^1$). Then, by the maximum principle (see \cite[sec. 6.4]{MR2597943}) and using the homogeneous Dirichlet condition, we get that the maximum (the minimum if $y_d > y^1$) of $\bar{y}-y^*$ is reached on the boundary and hence $y_d \geq y^* \geq \bar{y}$ (or $y_d \leq y^* \leq \bar{y}$ if $y_d > y^1$). We deduce $\Vert y^* - y_d \Vert \leq \Vert \bar{y} - y_d \Vert$. This means that $a^*$ is an optimal control. We conclude by uniqueness. 

We use a similar argument thanks to maximum principle for parabolic equations (see \cite[sec. 7.1.4]{MR2597943}) for existence of an optimal shape solution of \osd. 

In view of proving the next part of the theorem, we first give a useful Lemma inspired by \cite[Theorem 3.2]{MR3793605} and from \cite[Theorem 6.3]{MR3409135}.

\begin{lem}
Given any $p \in [1,+\infty)$ and any $u \in W^{1,p}(\Omega)$ such that $\vert \{u = 0\} \vert > 0$, we have $\nabla u = 0$ $a.e.$ on $\{u = 0\}$.
\label{derivativenullset}
\end{lem}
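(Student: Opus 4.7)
The plan is to reduce the statement to the classical Stampacchia-type chain-rule identities for the positive and negative parts of a Sobolev function. The only ingredient needed is that for any $u\in W^{1,p}(\Omega)$ the truncations $u^+=\max(u,0)$ and $u^-=\max(-u,0)$ belong to $W^{1,p}(\Omega)$ with weak gradients given by
\begin{equation*}
\nabla u^+ = \chi_{\{u>0\}}\,\nabla u,\qquad \nabla u^- = -\chi_{\{u<0\}}\,\nabla u,
\end{equation*}
a fact that is standard (see, e.g., Gilbarg--Trudinger, Lemma 7.6) and is proved by approximating the Lipschitz map $s\mapsto s^+$ by smooth functions $F_\varepsilon$ with $F_\varepsilon(0)=0$, applying the usual chain rule to $F_\varepsilon(u)$, and passing to the limit using dominated convergence.

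First I would record the decomposition $u=u^+-u^-$, which holds pointwise and therefore in $W^{1,p}(\Omega)$. Differentiating this identity weakly and inserting the two formulas above gives
\begin{equation*}
\nabla u \;=\; \chi_{\{u>0\}}\nabla u + \chi_{\{u<0\}}\nabla u \;=\; \chi_{\{u\neq 0\}}\,\nabla u \qquad \text{a.e. in }\Omega.
\end{equation*}
Multiplying both sides by $\chi_{\{u=0\}}$, which is disjoint from $\{u\neq 0\}$, I get $\chi_{\{u=0\}}\nabla u=0$ a.e., which is exactly the claim $\nabla u = 0$ a.e.\ on $\{u=0\}$. The hypothesis $|\{u=0\}|>0$ is merely what makes the conclusion non-vacuous; the argument itself does not need it.

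The only step that is not purely algebraic is the truncation identity for $\nabla u^{\pm}$, so that is where I would spend the proof effort if full details were required. A self-contained way to get it is to pick $F_\varepsilon\in C^1(\mathbb{R})$ with $F_\varepsilon(s)=0$ for $s\le 0$, $F_\varepsilon(s)=s-\varepsilon$ for $s\ge 2\varepsilon$, and $0\le F_\varepsilon'\le 1$; then $F_\varepsilon(u)\in W^{1,p}(\Omega)$ with $\nabla F_\varepsilon(u)=F_\varepsilon'(u)\nabla u$ by the Sobolev chain rule, and letting $\varepsilon\to 0^+$ yields $F_\varepsilon(u)\to u^+$ in $L^p$ and $F_\varepsilon'(u)\nabla u\to \chi_{\{u>0\}}\nabla u$ in $L^p$ by dominated convergence. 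Identifying limits in the distributional sense gives the required formula, after which the lemma follows from the one-line computation above.
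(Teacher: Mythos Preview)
Your proposal is correct and follows essentially the same route as the paper: both arguments establish the chain rule $\nabla S(u)=S'(u)\nabla u$ for $S\in C^1$ with bounded derivative, apply it to a smooth approximation of $s\mapsto s^+$, pass to the limit to obtain $\nabla u^\pm=\chi_{\{\pm u>0\}}(\pm\nabla u)$, and conclude via $u=u^+-u^-$. The only cosmetic difference is the choice of mollifier (the paper uses $S_\varepsilon(s)=(s^2+\varepsilon^2)^{1/2}-\varepsilon$ for $s\ge 0$ and $0$ otherwise, whereas you use a generic $F_\varepsilon$ interpolating between $0$ and $s-\varepsilon$), and your observation that the hypothesis $|\{u=0\}|>0$ is only there to make the conclusion non-vacuous is accurate.
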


\begin{proof}[Proof of Lemma \ref{derivativenullset}.]
A proof of a more general result can be found in \cite[Theorem 3.2]{MR3793605}. For completeness, we give here a short argument.
$Du$ denotes here the weak derivative of $u$. We need first to show that for $u\in W^{1,p}(\Omega)$ and for a function $S \in C^1{(\mathbf{R})}$ for which there exists $M>0$ such that $\Vert S'\Vert_{L^{\infty}(\Omega)}<M$, we have $S(u)\in W^{1,p}(\Omega)$ and $DS(u)=S'(u)Du$. To do that, by the Meyers-Serrins theorem, we get a sequence $u_n \in C^{\infty}(\Omega)\cap W^{1,p}(\Omega)$ such that $u_n\rightarrow u$ in $ W^{1,p}(\Omega)$ and $u_n \rightarrow u$ almost everywhere. We get by the chain rule $DS(u_n) = S'(u_n)Du_n$ and $\int_{\Omega} \vert DS(u_n)\vert^p\,dx \leq \Vert S'\Vert^p_{{L^{\infty}(\Omega)}}\Vert Du_n\Vert^p_{L^p(\Omega)}$ involving $S(u_n) \in W^{1,p}(\Omega)$. Since $S$ is Lipschitz, we have $\Vert S(u_n) -S(u) \Vert_{L^p(\Omega)} \leq \Vert u_n-u\Vert_{L^p(\Omega)} \rightarrow 0 \mbox{ when } n\rightarrow 0$. We write then
\begin{align*}
&\Vert DS(u_n)-S'(u)Du\Vert_{L^p(\Omega)} = \Vert S'(u_n)Du_n-S'(u)Du\Vert_{L^p(\Omega)} \\[0.2cm]
&\leq \Vert S'(u_n)(Du_n-Du)\Vert_{L^p(\Omega)} + \Vert (S'(u_n)-S'(u))Du\Vert_{L^p(\Omega)}\\[0.2cm]
&\leq \Vert S'\Vert_{L^{\infty}(\Omega)}\Vert u_n - u \Vert_{W^{1,p}(\Omega)}+\Vert (S'(u_n)-S'(u))Du\Vert_{L^p(\Omega)}.
\end{align*}
The first term tends to $0$ since $u_n\rightarrow u$ in $ W^{1,p}(\Omega)$. For the second term, we use that $\vert S'(u_n)-S'(u)\vert^p\vert Du\vert^p \rightarrow 0$ a.e. and $\vert S'(u_n)-S'(u)\vert^p \vert Du \vert^p \leq 2\Vert S'\Vert^p_{L^{\infty}(\Omega)} \vert Du \vert^p \in L^1(\Omega)$. By the dominated convergence theorem, $\Vert (S'(u_n)-S'(u))Du\Vert_{L^p(\Omega)} \rightarrow 0$ which implies that $\Vert DS(u_n)-S'(u)Du\Vert_{L^p(\Omega)} \rightarrow 0$. Finally $S(u_n) \rightarrow S(u) \mbox{ in } W^{1,p}(\Omega)$ and $DS(u) = S'(u)Du$. Then, we consider $u^+ = \max(u,0)$ and $u^- = \min(u,0) = -\max(-u,0)$. We define 
$$
S_{\varepsilon}(s) = \left\{ \begin{array}{ll} (s^2+\varepsilon^2)^{\frac{1}{2}}-\varepsilon & \mbox{ if } s\geq 0 \\ 0 & \mbox{ else. } \end{array} \right.
$$
Note that $\Vert S_{\varepsilon}'\Vert_{L^{\infty}(\Omega)}<1$. We deduce that $DS_{\varepsilon}(u)=S_{\varepsilon}'(u)Du$ for every $\varepsilon>0$. For $\phi \in C^{\infty}_c(\Omega)$ we take the limit of $\int_{\Omega}S_{\varepsilon}(u)D\phi\,dx$ when $\varepsilon\rightarrow 0^+$ to get that 
$$
Du^+=\left\{\begin{array}{ll} Du &\mbox{ on } \{u>0\} \\0 &\mbox{ on }  \{u\leq 0\} \end{array} \right.
\ \textrm{and}\ 
Du^-=\left\{\begin{array}{ll} 0 &\mbox{ on } \{u\geq 0\} \\-Du &\mbox{ on }  \{u<0\} \end{array}. \right.
$$
Since $u = u^+-u^-$, we get $Du = 0$ on $\{u=0\}$. We can find this Lemma in a weaker form in \cite[Theorem 6.3]{MR3409135}.\end{proof}

\textit{2-(ii)} We assume that $A y_d \leq \beta$ in $\Omega$ with $\beta=\bar{s}Ac^*$. 
Having in mind (\ref{OCsop}) and (\ref{optimstatchi}), we assume by contradiction that $|\{\bar{p}=\bar{s}\}| > 0 $. Since $A$ and $A^*$ are differential operators, applying $A^*$ to $\bar{p}$ on $\{\bar{p}=\bar{s}\}$, we obtain by Lemma \ref{derivativenullset} that $A^* \bar{p} = c^*\bar{s}$ on $\{\bar{p}=\bar{s}\}$. Since $(\bar{y},\bar{p})$ verifies \eqref{OCsop} we get $y_d-\bar{y} = c^*\bar{s}$ on $\{\bar{p}=\bar{s}\}$. We apply then $A$ to this equation to get that $A y_d-\bar{s}Ac^* = A\bar{y} = \bar{a}$ on $\{\bar{p}=\bar{s}\}$. Therefore $A y_d-\bar{s}Ac^* \in (0,1)$ on $\{\bar{p}=\bar{s}\}$ which contradicts $A y_d \leq \beta$. Hence $|\{\bar{p}=\bar{s}\}| = 0 $ and thus \eqref{optimstatchi} implies $\bar{a}=\chi_{\bar{\omega}}$ for some ${\bar{\omega}} \in \mathcal{U}_L$. Existence of solution for (\ssd)\, is proved. 

The uniqueness of optimal controls comes from the strict convexity of the cost functionals. Indeed, in the dynamical case, whatever $(\gamma_1, \gamma_2)\neq(0,0)$ may be, $J_T$ is strictly convex with respect to variable $y$. The injectivity of the control-to-state mapping gives the strict convexity with respect to the variable $a$. In addition, uniqueness of $(\bar{y},\bar{p})$ follows by application of the Poincar\'e inequality and uniqueness of $(y_T,p_T)$ follows from Gronwall inequality \eqref{gronwall} in the appendix.

\subsection{Proof of Theorem \ref{integralturnpikethm}}
For $\gamma_1=1, \gamma_2=0$ (Lagrange case), the cost is $J_{T}(\omega) = \frac{1}{2T}\int_{0}^{T}\Vert y(t)-y_{d}\Vert^{2}\,dt$. We consider the triples $(y_T,p_T,\chi_{\omega_T})$ and $(\bar{y},\bar{p},\chi_{\bar{\omega}})$ satisfying the optimality conditions (\ref{OCocp}) and (\ref{OCsop}).
Since $\chi_{\omega_T(t)}$ is bounded at each time $t \in [0,T]$ and by application of Gronwall inequality \eqref{gronwall} in the appendix to $y_{T}$ and $p_{T}$ we can find a constant $C>0$ depending only on $A, y_0, y_d, \Omega, L$ such that
\begin{equation*}
\forall T >0 \quad \Vert y_T(T) \Vert^{2} \leq C \quad \mbox{and} \quad
\Vert p_T(0) \Vert^{2} \leq C.
\end{equation*}
Setting $\tilde{y} = y_T-\bar{y},\tilde{p} =p_T-\bar{p},\tilde{a}=\chi_{\omega_T}-\chi_{\bar{\omega}}$, we have
\begin{eqnarray}
\partial_t \tilde{y} +A\tilde{y} = \tilde{a}, \quad \tilde{y}_{\vert \partial \Omega}&=&0, \quad \tilde{y}(0) = y_{0}-\bar{y}
\label{optimedpy} \\
\partial_t \tilde{p} -A^* \tilde{p} = \tilde{y}, \quad \tilde{p}_{\vert \partial \Omega}&=&0,  \quad \tilde{p}(T) = -\bar{p}.
\label{optimedpp}
\end{eqnarray}
First, using (\ref{OCocp}) and (\ref{OCsop}) one has $\big(\tilde{p}(t),\tilde{a}(t)\big) \geq 0$ for almost every $t \in [0,T]$.
Multiplying (\ref{optimedpy}) by $\tilde{p}$, (\ref{optimedpp}) by $\tilde{y}$ and then adding them, one  can use the fact that
$$
\big(\bar{y}-y_{0},\tilde{p}(0)\big) - \big(\tilde{y}(T),\bar{p}\big) = \int_{0}^{T}\big(\tilde{p}(t),\tilde{a}(t)\big)\,dt + \int_{0}^{T}\Vert \tilde{y}(t) \Vert^{2} \,dt.
\label{ineq_yp}
$$
By the Cauchy-Schwarz inequality we get a new constant $C>0$ such that
\begin{equation*} 
\frac{1}{T}\int_{0}^{T}\Vert \tilde{y}(t) \Vert^{2} \,dt +\frac{1}{T} \int_{0}^{T} \big(\tilde{p}(t),\tilde{a}(t)\big)\,dt \leq \frac{C}{T}.
\end{equation*}
The two terms at the left-hand side are positive and using the inequality (\ref{energy}) with $\zeta(t) = \tilde{p}(T-t)$, we finally obtain $M>0$ independent of $T$ such that
\begin{equation*}
\frac{1}{T}\int_{0}^{T} \big(\Vert y_T(t) - \bar{y} \Vert^{2} + \Vert  p_T(t) - \bar{p} \Vert^{2}\big) \,dt \leq \frac{M}{T}. 
\end{equation*}

\subsection{Proof of Theorem \ref{measureturnpikethm}}
(i) Strict dissipativity is established thanks to the storage function $S(y) = \big(y,\bar{p}\big)$ where $\bar{p}$ is the optimal adjoint. Following the Gronwall inequality \eqref{gronwall} in the appendix, since $\Vert y(t) \Vert^2 < M$ for every $t \in[0,T]$ with $M$ independent of final time $T$, the storage function $S$ is locally bounded and bounded below. We next consider an admissible pair $(y(\cdot),\chi_{\omega(\cdot)})$ of \ocp, we multiply \eqref{heat} by $\bar{p}$ and or $\tau>0$, we integrate over $(0,\tau)\times\Omega$ and use optimality conditions of static problem \eqref{OCsop}-\eqref{optimstat} combined with integration by parts to write
\begin{align*}
\int_0^{\tau} \big(y_t +Ay,\bar{p}\big)\,dt = \int_0^{\tau} \big(\chi_{\omega(t)},\bar{p}\big)\,dt \leq  \int_0^{\tau} \big(\chi_{\bar{\omega}},\bar{p}\big)\,dt \\
\mbox{and so}\qquad \big(y(\tau),\bar{p}\big) - \int_0^{\tau} \big(y(t)-\bar{y},\bar{y}-y_d\big)\,dt \leq \big(y(0),\bar{p}\big).
\end{align*}%
Noting that $\Vert y-\bar{y}\Vert^2 +2\big(y-\bar{y},\bar{y}-y_d\big) = \Vert y-y_d \Vert^2 - \Vert \bar{y}-y_d\Vert^2$ we make appear the quantity $\Vert y(t)-\bar{y}\Vert^2$ and finally get the strict dissipation inequality (\ref{ineqDISSIP}) with respect to the supply rate function $w(y,\omega) = \frac{1}{2}\Big(\Vert y-y_d \Vert^2 - \Vert \bar{y}-y_d\Vert^2\Big)$ and with $\alpha(s)=\frac{1}{2}s^2$:
\begin{equation}
(\bar{p},y(\tau)) + \int_0^\tau \frac{1}{2}\Vert y(t) - \bar{y} \Vert^2 \,dt \leq (\bar{p},y(0)) + \int_0^\tau  w\big(y(t),\omega(t)\big)\,dt.
\label{dissipativityinequality}
\end{equation}

(ii) Now we prove that strict dissipativity implies measure-turnpike, by following an argument of \cite{measureturnpikeTZ}.
Applying (\ref{dissipativityinequality}) to the optimal solution $(y_T,\omega_T)$ at $\tau=T$, we get
\begin{equation}
\frac{1}{T} \int_0^T \Vert y_T(t)-\bar{y} \Vert^2 \, dt   \leq J_T-\bar{J} + \frac{(y_T(0)-y_T(T),\bar{p})}{T}.
\nonumber
\end{equation}
Considering then the solution $y_s$ of (\ref{heat}) with $\omega(\cdot) = \bar{\omega}$ and $J_s = {\frac{1}{T}\int_{0}^{T}\Vert y_s(t)-y_{d}\Vert^{2}}$, we have $J_T-J_s < 0$ and we show that $J_s-\bar{J} \leq \frac{1-e^{-CT}}{CT}$, then we find $M_1>0$ independent of $T$ such that
\begin{equation} 
\frac{1}{T} \int_0^T \Vert y_T(t)-\bar{y} \Vert^2 \, dt   \leq  \frac{M_1}{T}.
\label{dissipativityineqstate}
\end{equation}
Applying (\ref{energy}) to $\zeta(\cdot) = p_T(T-\cdot) - \bar{p}$, we get $M_2>0$ independent of $T$ such that
\begin{equation}
\frac{1}{T} \int_0^T \Vert p_T(t)-\bar{p} \Vert^2 \,dt\leq \frac{M_2}{T} \int_0^T \Vert y_T(t)-\bar{y} \Vert^2 \,dt.
\label{dissipativityineqadjoint}
\end{equation}
We combine \eqref{dissipativityineqstate} and \eqref{dissipativityineqadjoint} to finally get a constant $M > 0$ which does not depend on $T$ such that $\forall \varepsilon >0, ~ \displaystyle{\vert P_{\varepsilon,T} \vert \leq \frac{M}{\varepsilon^2}}$.

\subsection{Proof of Theorem \ref{turnpikeexpothm}}
We take $\gamma_1=0,\gamma_2=1$ (Mayer case). We want to characterize optimal shapes as being the level set of some functions as in \cite{dambrine:hal-02057510}.
Let $(y_T,p_T,\chi_{\omega_T})$ be an optimal triple, coming from Theorem \ref{existencethm}-(i). Then $\zeta(t,x) = p_{T}(T-t,x)$ satisfies
\begin{equation}
\partial_t \zeta +A^* \zeta = 0, \quad \zeta_{\vert \partial \Omega}=0, \quad \zeta(0) = y_d-y_{T}(T).
\label{adjointretrograde}
\end{equation}
We write $y_d-y_T(T)$ in the basis $(\phi_j)_{j \in \mathbf{N}^*}$. There exists $(\zeta_j) \in \mathbf{R}^{\mathbf{N}^*}$ such that $ y_d - y_T(T) = \sum_{j\geq 1} \zeta_j \phi_j $. We can solve \eqref{adjointretrograde} and get $ p_T(t,x) = \sum_{j\geq 1} \zeta_j \phi_j(x) e^{-\lambda_j(T-t)}$. Using the Gronwall inequality \eqref{gronwall} in the appendix, there exists $C_1>0$ independent of $T$ such that the solution of \eqref{heat} satisfies $\Vert y_T(t) \Vert ^2 \leq C_1$ for every $t \in (0,T)$. Hence $\vert \zeta_j \vert^2 \leq C_1$ for every $j \in \mathbf{N}^*$. Let us consider the index $j_0 = \inf\{j \in \mathbf{N}, \zeta_j \neq 0\}$. Take $\lambda = \lambda_{j_0}$ and $\mu = \lambda_k$ where $k$ is the first index for which $\lambda_k>\lambda$. We define $\displaystyle{\Phi_0 = \sum_{\lambda_j = \lambda_{j_0}} \zeta_j\phi_j}$ which is a finite sum of the eigenfunctions associated to the eigenvalue $\lambda_{j_0}$. We write, for every $x \in \Omega$ and every $t \in [0,T]$,
\begin{align*}
\vert p_T(t,x) - e^{-\lambda(T-t)}\Phi_0(x) \vert &= \left\vert \sum_{j\geq k} \zeta_j \phi_j(x) e^{-\lambda_j (T-t)} \right\vert \\
&\leq \sum_{j\geq k} \left\vert \zeta_j \phi_j(x)\right\vert e^{-\lambda_j (T-t)}. 
\end{align*}
Since $\vert \zeta_j \vert^2 \leq C_1, \forall j \in \mathbf{N}^*$, by the Weyl Law and sup-norm estimates for the  eigenfunctions of $A$ (see \cite[Chapter 3]{MR3186367}), we can find $\alpha \in (0,1)$ such that $\alpha \mu > \lambda$ and two constants $C_1,C_2>0$ independent of $x$, $t$ and $T$ such that
$$
\vert p_T(t,x) - e^{-\lambda(T-t)}\Phi_0(x) \vert \leq C_1 e^{-\alpha\mu(T-t)} \sum_{j\geq k}   j^{\frac{N-1}{2N}} e^{-C_2 j^{\frac{1}{N}}(T-t)}.
$$
Let $\varepsilon > 0$ be arbitrary. We claim that there exists $C_{\varepsilon}>0$ independent of $x$, $t$, $T$ such that, for every $ x \in \Omega$,
\begin{align*}
\vert p_T(t,x) - e^{-\lambda(T-t)}\Phi_0(x) \vert &\leq C_{\varepsilon} e^{-\alpha\mu(T-t)}\quad \forall t \in (0,T-\varepsilon) \\[0,2cm]
\vert p_T(t,x) - e^{-\lambda(T-t)}\Phi_0(x) \vert &\leq C_{\varepsilon} \qquad \forall t \in (T-\varepsilon,T).
\end{align*}
To conclude we take an arbitrary value for $\varepsilon$ and we write $\mu$ instead of $\alpha \mu$ but always with $\mu>\lambda$ to get
\begin{equation}
\Vert p_T(t)-e^{-\lambda(T-t)}\Phi_0 \Vert_{L^{\infty}(\Omega)} \leq C\,e^{-\mu (T-t)} \quad \forall t \in [0,T]
\label{turnpikeadjoint}
\end{equation}
with $C>0$ not depending on the final time $T$. 
Using the bathtub principle (\cite[Theorem 1.16]{MR1817225}) and since $\Phi_0$ is analytic, we introduce $s_0 \in \mathbf{R}$ and the shape $\omega_{0} = \{\Phi_0 \geq s_{0}\} \in \mathcal{U}_L$ such that $\chi_{\omega_{0}}$ is solution of the auxiliary problem
\begin{equation}
\max_{u\in \overline{\mathcal{U}}_L} \int_{\Omega} \Phi_0(x)u(x)\,dx.  
\label{solstatic}
\end{equation}
Let $t\in[0,T]$ fixed. For $x\in \omega_0$, we remark that \eqref{turnpikeadjoint} implies that $p(t,x) \geq  s_0 e^{-\lambda(T-t)} - C^{-\mu(T-t)}$. Reminding the definition of $s_T(t)$ in \eqref{leveltime} we write
$$
\left\{\begin{array}{l}
\omega_0 \subset \big\{p(t,x) \geq s_0 e^{-\lambda(T-t)} - C^{-\mu(T-t)}\big\} \\[0.2cm]
\vert \omega_0 \vert = L\vert \Omega \vert \quad \mbox{and} \quad \Big\vert \big\{ p_T(t,x) \geq s_T(t) \big\} \Big\vert \leq L\vert \Omega \vert.
\end{array} \right. 
$$
Hence $s_T(t) \geq  s_0 e^{-\lambda(T-t)} - C^{-\mu(T-t)}$.
We change the roles of $\omega_0$ and $\omega_T(t)$ to get $s_T(t) \leq  s_0 e^{-\lambda(T-t)} + C^{-\mu(T-t)}$ and finally obtain
\begin{equation}
\vert s_T(t) - e^{-\lambda(T-t)} s_0  \vert \leq C\,e^{-\mu (T-t)} \quad \forall t \in [0,T].
\label{turnpikelevel}
\end{equation}
We write $\Phi = s_0-\Phi_0$, $\psi_T(t,x) =s_T(t) - p_T(t,x)$ and $\psi_0(t,x) = e^{-\lambda(T-t)}\Phi(x)$ and using \eqref{turnpikeadjoint} with \eqref{turnpikelevel}, we get a new constant $C>0$ independent of $T$ such that
\begin{equation}
\Vert \psi_T(t,x)-\psi_0(t,x)\Vert_{L^{\infty}(\Omega)} \leqslant C\,e^{-\mu (T-t)}, \quad \forall t \in [0,T].
\label{turnpikepsi}
\end{equation}
We now follow arguments of \cite{dambrine:hal-02057510} to establish the exponential turnpike property for the control and then for the state by using some information on the control $\chi_{\omega_T}$. We first remark that for all $t_1,t_2 \in [0,T]$, $\big\{\psi_0(t_1,\cdot) \leq 0\big\} = \big\{\psi_0(t_2,\cdot) \leq 0\big\}  = \big\{ \Phi \leq 0 \big\} $. Then we take $t \in [0,T]$ and we compare the sets $\big\{\psi_0(t,\cdot) \leq 0\big\}, \big\{\psi_T(t,\cdot) \leq 0\big\} \mbox{ and } \big\{\psi_0(t,\cdot) + C e^{-\mu (T-t)} \leq 0\big\}$. Thanks to \eqref{turnpikepsi} we get for every $ t \in [0,T]$
\begin{eqnarray}
\hspace{-0.5cm} \big\{\Phi\! \leq\! -C e^{-(\mu-\lambda) (T-t)} \big\} \subset \big\{\psi_T(t,\cdot)\! \leq \!0\big\} \subset \big\{\Phi\! \leq\! C e^{-(\mu-\lambda) (T-t)}\big\}~ \\
\hspace{-0.5cm} \big\{\Phi\! \leq \!-C e^{-(\mu-\lambda) (T-t)} \big\} \subset \big\{\psi_0(t,\cdot) \!\leq\! 0\big\} \subset \big\{\Phi \!\leq\! C e^{-(\mu-\lambda) (T-t)}\big\}.
\end{eqnarray}
We infer from \cite[Lemma 2.3]{dambrine:hal-02057510} that for every $ t \in [0,T]$,
\begin{multline}\label{inegalitehausdorffdistance}
d_{\mathcal{H}} \Big( \big\{\psi_T(t,\!\cdot) \leq  0\big\}, \big\{\Phi \leq 0\big\} \Big) \\
\leq d_{\mathcal{H}} \Big( \big\{\Phi\leq\!-C e^{-(\mu-\lambda) (T-t)}\big\},
 \big\{\Phi \leq C e^{-(\mu-\lambda) (T-t)}\big\}\Big).
\end{multline}
To conclude, since $d_{\mathcal{H}}$ is a distance, we only have to estimate $d_{\mathcal{H}} \Big( \big\{\Phi\!\leq\!0\big\},\big\{\Phi\! \leq \pm C e^{-(\mu-\lambda) (T-t)}\big\}\Big)$. 

\begin{lem}
Let $f : \Omega \rightarrow \mathbf{R}$ be a continuously differentiable function and set $\Gamma = \big\{f=0\big\}$. Under the assumption \emph{\emph{(S)}}: there exists $C>0$ such that  
$$
\Vert \nabla f(x) \Vert \geq C \quad \forall x \in \Gamma,
$$
there exist $\varepsilon_0>0$ and $C_f>0$ only depending on $f$ such that for any $\varepsilon \leq \varepsilon_0$ 
$$
d_{\mathcal{H}}\big(\big\{f\leq 0\big\},\big\{f \leq \pm \varepsilon\big\}\big) \leq C_f \varepsilon .
$$
\label{propdambrine}
\end{lem}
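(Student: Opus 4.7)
The plan is to extend the pointwise nondegeneracy (S) to a tubular neighborhood of $\Gamma$ and then to slide points from nearby level sets onto the zero level set by a normalized gradient flow. The underlying observation is that if $\dot\gamma(t) = -\nabla f(\gamma(t))/\Vert\nabla f(\gamma(t))\Vert^{2}$, then $\frac{d}{dt}f(\gamma(t)) = -1$, so $f$ decreases linearly at unit rate and the arclength travelled is directly controlled by $1/\Vert\nabla f\Vert$.

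First, by continuity of $\nabla f$ and assumption (S), I would pick $\delta>0$ such that $\Vert\nabla f(x)\Vert\geq C/2$ on the tubular set $U := \{x\in\Omega:\operatorname{dist}(x,\Gamma)<\delta\}$, and then use continuity of $f$ to choose $\varepsilon_0>0$ small enough that $\{|f|\leq\varepsilon_0\}\subset U$. Fix $\varepsilon\in(0,\varepsilon_0]$. Since $\{f\leq 0\}\subset\{f\leq\varepsilon\}$, the Hausdorff distance $d_{\mathcal{H}}(\{f\leq 0\},\{f\leq\varepsilon\})$ reduces to the single supremum $\sup_{x\in\{f\leq\varepsilon\}}d(x,\{f\leq 0\})$, and only points $x$ with $0<f(x)\leq\varepsilon$ are relevant. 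For such an $x$, the flow $\gamma$ above satisfies $f(\gamma(t))\in[0,\varepsilon]\subset[0,\varepsilon_0]$ for $t\in[0,f(x)]$, so the trajectory remains in $U$, where $\Vert\dot\gamma\Vert=1/\Vert\nabla f(\gamma(t))\Vert\leq 2/C$; at time $t^\star=f(x)\leq\varepsilon$ the trajectory reaches $\{f=0\}$ with arclength at most $2\varepsilon/C$, giving $d(x,\{f\leq 0\})\leq 2\varepsilon/C$ and hence $C_f=2/C$.

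The case $d_{\mathcal{H}}(\{f\leq 0\},\{f\leq -\varepsilon\})$ is entirely symmetric: the inclusion $\{f\leq -\varepsilon\}\subset\{f\leq 0\}$ reduces everything to bounding, for $x$ with $-\varepsilon<f(x)\leq 0$, the distance $d(x,\{f\leq -\varepsilon\})$, and the same normalized flow reaches $\{f=-\varepsilon\}$ after time $\varepsilon+f(x)\leq\varepsilon$ with the same arclength estimate. The main obstacle I anticipate is the verification that the trajectories stay inside the good neighborhood $U$ (and inside $\Omega$) on the whole time interval; this is resolved almost for free by the monotone decrease of $f$ along the flow together with the choice of $\varepsilon_0$, while existence of the trajectories with a merely continuous right-hand side is ensured by Peano's theorem since $\Vert\nabla f\Vert\geq C/2$ keeps the driving vector field continuous on $U$ (uniqueness of $\gamma$ is not needed, as any admissible trajectory produces the required upper bound on $d(x,\{f\leq 0\})$).
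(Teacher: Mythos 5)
Your proposal is correct, but it follows a genuinely different route from the paper's. Both proofs share the same first step: upgrading the pointwise bound (S) on $\Gamma$ to a uniform bound $\Vert\nabla f\Vert\geq C/2$ on a whole sublevel neighborhood $\{\vert f\vert\leq\varepsilon_0\}$. The paper does this by a contradiction/subsequence argument directly on the sets $\{\vert f\vert\leq 1/n\}$, whereas you pass first through a tubular neighborhood of $\Gamma$; these are equivalent and both rely on the same (implicit) compactness of the sublevel sets inside $\Omega$, which holds in the application. The divergence is in the second step: the paper simply invokes an external quantitative level-set stability result (Corollary 4 of the cited reference of Dambrine et al.) to conclude $d_{\mathcal{H}}\leq\frac{2}{C}\varepsilon$, while you reprove that estimate from scratch via the normalized gradient flow $\dot\gamma=-\nabla f/\Vert\nabla f\Vert^{2}$, using that $f$ decreases at unit rate so the arclength to the target level set is at most $2\varepsilon/C$. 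Your argument is self-contained, produces the same explicit constant $C_f=2/C$, and the points you flag as delicate (trajectories staying in the good neighborhood, existence via Peano plus a continuation argument on the compact set $\{0\leq f\leq\varepsilon_0\}$) are handled correctly; the paper's version is shorter and leans on a more general published statement. One cosmetic remark: the Hausdorff distance between the two one-sided perturbations does reduce to a single supremum as you say, precisely because of the inclusions $\{f\leq 0\}\subset\{f\leq\varepsilon\}$ and $\{f\leq-\varepsilon\}\subset\{f\leq 0\}$, so nothing is lost there.
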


\begin{proof}[Proof of Lemma \ref{propdambrine}.]
We consider $f$ satisfying \emph{(S)} with $\Gamma = \big\{ \Phi =0 \big\}$. We assume by contradiction that for every $\varepsilon >0$, there exists $x \in \big\{\vert f\vert \leq \varepsilon \big\}$ such that $\Vert \nabla f(x) \Vert =0$. We take $\varepsilon = \frac{1}{n}$ and we subtract a subsequence $(x_n)\rightarrow x \in \big\{\vert f\vert \leq 1 \big\}$ (which is compact). By continuity of $f$ and of $\Vert \nabla f\Vert$, we have $x \in \Gamma$ and $\Vert f(x) \Vert=0$, which raises contradiction with \emph{(S)}. Hence we find $\varepsilon_0>0$ such that $\Vert \nabla f(x) \Vert \geq \frac{C}{2}$ for every $x \in \big\{\vert f\vert \leq \varepsilon \big\}$. We apply \cite[Corollary 4]{MR2592958} (see also \cite[Theorem 2]{MR2592958}) to get
$$
d_{\mathcal{H}}\big(\big\{f\leq 0\big\},\big\{f \leq \pm \varepsilon\big\}\big) \leq \frac{2}{C} \varepsilon.
$$
A more general statement can be found in \cite{MR2592958, dambrine:hal-02057510}.
\end{proof}

We infer that $\Phi$ satisfies \emph{(S)} on $\Vert \nabla_x \psi_0(t,x) \Vert = e^{-\lambda(T-t)}\Vert \nabla_x \Phi(x) \Vert $ for $x \in \Omega$. We first remark that $\Phi_0$ satisfies $A \Phi_0 = \lambda_{j_0} \Phi_0, \Phi_{0_{\vert \Gamma}} = s_0$ and that the set $\Gamma=\big\{\Phi_0=0\big\}$ is compact. Since $\Omega$ has a $C^2$ boundary and $c=0$ the Hopf lemma (see \cite[sec. 6.4]{MR2597943}) gives
$$
x_0 \in \Gamma_0 \implies  \Vert \nabla_x \Phi (x_0) \Vert = \Vert \nabla_x \Phi_0 (x_0) \Vert > 0.
$$ 
Hence there exists $C_0>0$ not depending on $t$, $T$ such that for every $x \in \Gamma_0$, $\Vert \nabla_x \Phi (x_0) \Vert \geq C_0 >0$. We take $\nu>0, e^{-\mu \nu} \leq \varepsilon_0$. We remark that $e^{-\mu (T-t)} \leq \varepsilon_0, \forall t \in (0,T-\nu)$ and we use Lemma \ref{propdambrine} combined with \eqref{inegalitehausdorffdistance} to get that, for every $t \in (0,T-\nu)$,
$$
d_{\mathcal{H}} \Big( \big\{\psi_T(t,\!\cdot) \leq  0\big\}, \big\{\Phi \leq 0\big\} \Big) \leq 
C_0 e^{-(\mu-\lambda) (T-t)}.
$$
We adapt the constant $C_0$ such that on the compact interval $t\in (T-\nu,T)$ the sets are the same whatever $T\geq T_0>0$ may be, to get that, for every $t \in (0,T)$,
$$
d_{\mathcal{H}} \Big( \big\{\psi_T(t,\!\cdot) \leq  0\big\}, \big\{\Phi \leq 0\big\} \Big) \leq 
C_0 e^{-(\mu-\lambda) (T-t)}.
$$
We obtain therefore an exponential turnpike property for the control in the sense of the Hausdorff distance 
\begin{equation}
d_{\mathcal{H}} ( \omega_T(t), \omega_0 )\leq C_0 e^{-(\mu-\lambda) (T-t)} \quad \forall t \in [0,T].
\label{turnpikeshape}
\end{equation}
Here is a possible way to find a further turnpike property on state and adjoint. We could use a similar argument (valid only for convex sets) as in \cite[Theorem 1-(i)]{MR1745583}: $\Vert \chi_{\omega_T(t)} - \chi_{\omega_0} \Vert \leq C d_{\mathcal{H}} ( \omega_T(t), \omega_0 )$. Denoting by $b_{\omega} = d_{\omega}-d_{\omega^c}$ the oriented distance, we follow \cite[Theorem 4.1-(ii)]{MR1855817} and \cite[Theorem 5.1-(iii)(iv)]{MR1855817} and we use the inequality $\Vert \chi_{\overline{A}_1} - \chi_{\overline{A}_2}\Vert \leq \Vert d_{A_1} - d_{A_2} \Vert_{W^{1,2}(\Omega)} \leq \Vert b_{A_1} - b_{A_2} \Vert_{W^{1,2}(\Omega)} = \Vert b_{A_1} - b_{A_2} \Vert + \Vert \nabla b_{A_1} - \nabla b_{A_2} \Vert$ to try to make the link between $\Vert \chi_{\omega_T(t)} - \chi_{\omega_0} \Vert$ and $d_{\mathcal{H}} ( \omega_T(t), \omega_0 )$. Afterwards, applying Gronwall inequality \eqref{gronwall}, we get
\begin{equation}
\Vert y(t) - \bar{y} \Vert_{L^{2}(\Omega)} \leq C_0 e^{-\frac{(\mu+\lambda)}{2} (T-t)} \quad \forall t \in (0,T)
\label{turnpikestate}
\end{equation}
with $\bar{y}$ solution of $A y = \chi_{\omega_0}, y_{\vert \partial \Omega} = 0$. Taking $\kappa =  \frac{\mu+\lambda}{2} > 0$ and by application of Gronwall inequality \eqref{gronwall} for the adjoint, we finally get the exponential turnpike property for the state, adjoint and control.

\section{Numerical simulations: optimal shape design for the 2D heat equation}
\label{sec:Numerical simulations}
We take $\Omega = [-1,1]^2$, $L = \frac{1}{8}$, $T\in\{1,\ldots,5\}$, $y_{d}=\mathrm{Cst}=0.1$ and $y_0=0$.
We focus on the heat equation and consider the minimization problem
\begin{equation}
\displaystyle{\min_{\omega(\cdot)} \int_{0}^{T}{\int_{[-1,1]^2} |y(t,x)-0.1|^{2}\, dx\, dt}}
\end{equation}
under the constraints
\begin{equation}
\partial_t y - \triangle y = \chi_{\omega}, \qquad y(0,\cdot) = 0,\qquad y_{\vert \partial \Omega}=0.
\end{equation}

We compute numerically a solution by solving the equivalent convexified problem \ocp\, thanks to a direct method in optimal control (see \cite{MR2224013}). We discretize here with an implicit Euler method in time and with a decomposition on a finite element mesh of $\Omega$ using \texttt{FREEFEM++} (see \cite{MR3043640}). We express the problem as a quadratic programming problem in finite dimension. We use then the routine \texttt{IpOpt} (see \cite{ipopt}) on a standard desktop machine.

\begin{figure}[!h]
\centering
\includegraphics[width=1\linewidth]{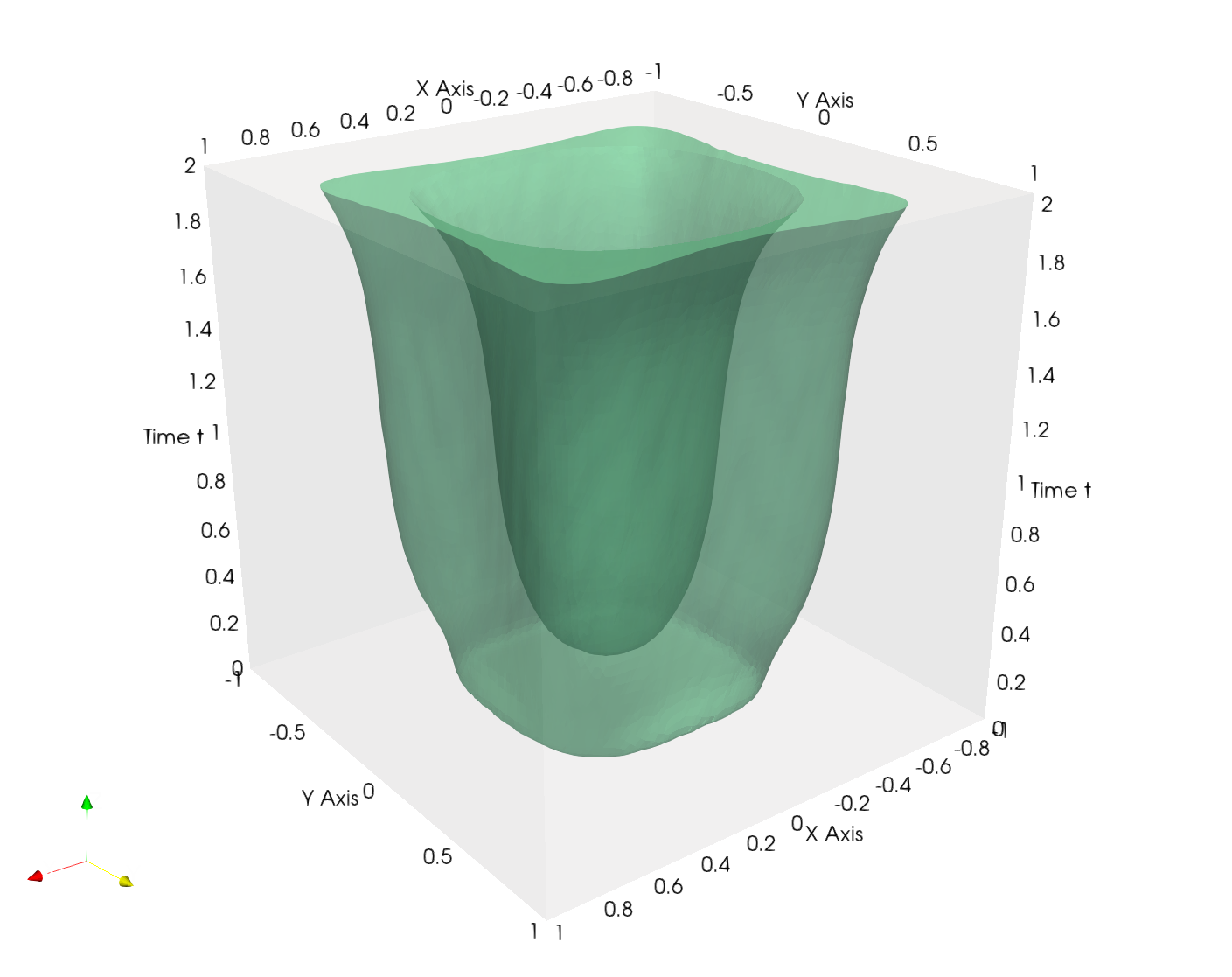}
\caption{Optimal shape's time evolution cylinder - $T=2$}
\label{fig:colonne}
\end{figure}

We plot in Figure \ref{fig:colonne} the evolution in time of the optimal shape $t\mapsto\omega(t)$ which appears like a cylinder whose section at time $t$ represents the shape $\omega(t)$. At the beginning $(t=0$) we notice that the shape concentrates at the middle of $\Omega$ in order to warm as soon as possible near to $y_d$. Once it is acceptable the shape is almost stationary during a long time. Finally, since the target $y_d$ is taken here as a constant, the optimal final state $y_T(T)$ should be as flat as possible. Indeed, for  $t<T$ and plotting the state's curve, we observe that $y_T(t)$ is much larger at the center of $\Omega$ than close to the boundary. So at final time, the shape comes closer to the boundary of $\Omega$ such that $y_T(T)$ gets larger close to it and lower at the center. We observe therefore that $y_T(T)$ is almost constant in $\Omega$ and very close to $y_d$.

~~

\begin{figure}[!h]%
\centering
\subfigure[][]{%
\label{fig:ex3-a}%
\includegraphics[scale=0.145]{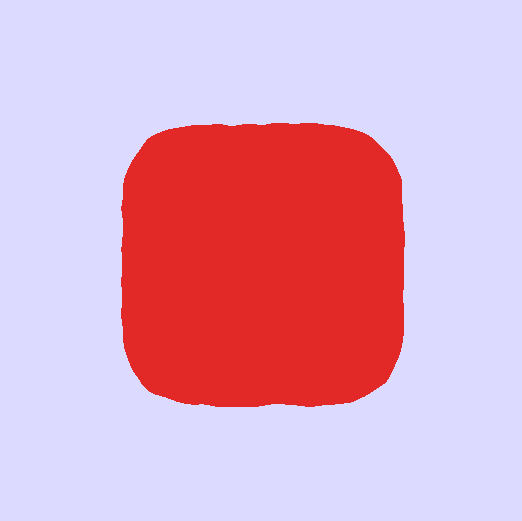}}%
\hspace{25pt}%
\subfigure[][]{%
\label{fig:ex3-b}%
\includegraphics[scale=0.145]{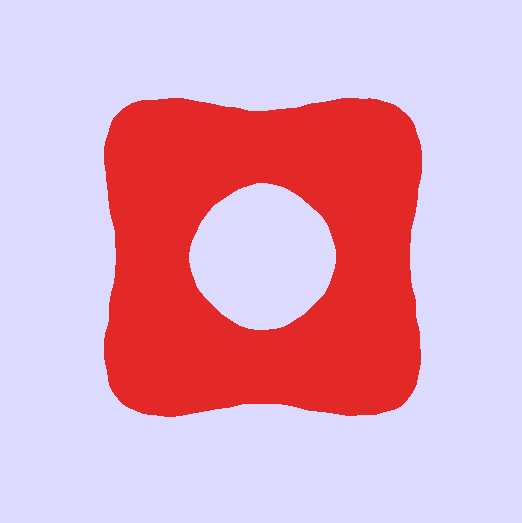}}%
\hspace{25pt}%
\subfigure[][]{%
\label{fig:ex3-c}%
\includegraphics[scale=0.145]{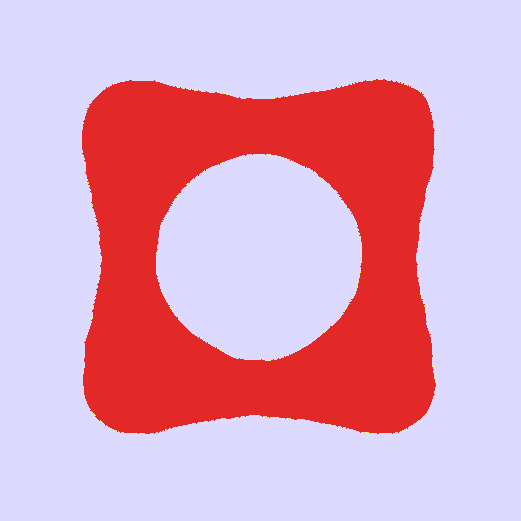}} \\\vspace{5pt}
\subfigure[][]{%
\label{fig:ex3-d}%
\includegraphics[scale=0.145]{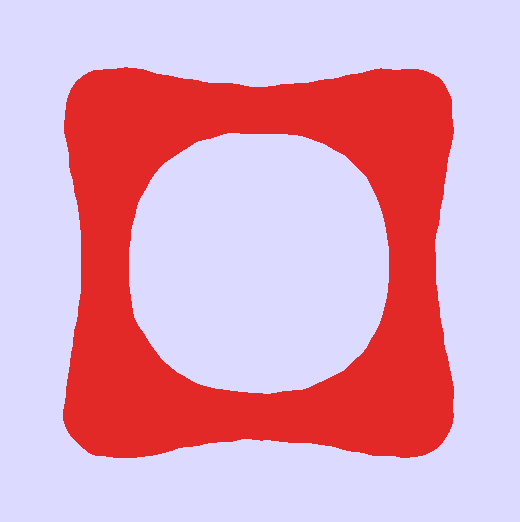}}%
\hspace{25pt}%
\subfigure[][]{%
\label{fig:ex3-e}%
\includegraphics[scale=0.145]{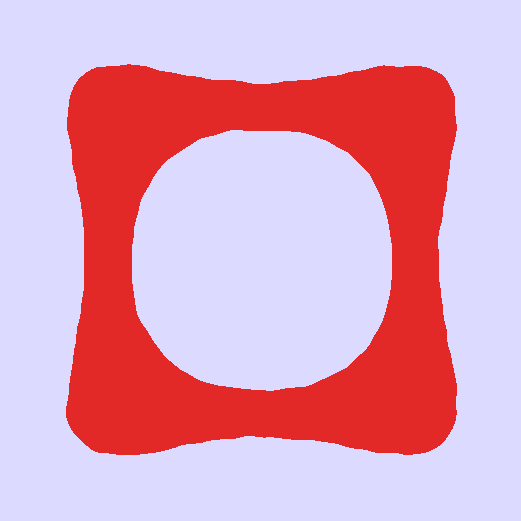}}%
\hspace{25pt}%
\subfigure[][]{%
\label{fig:ex3-f}%
\includegraphics[scale=0.145]{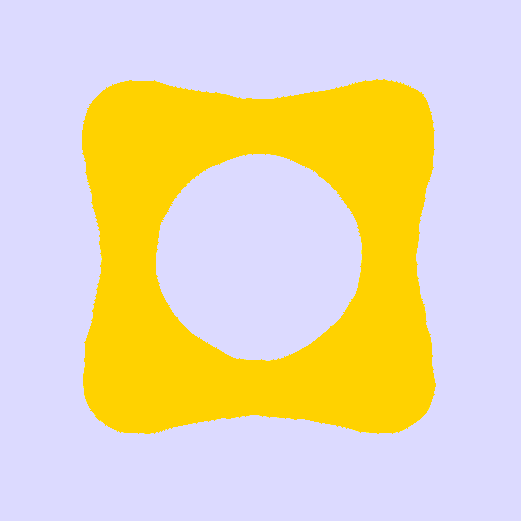}}%
\caption[Time optimal shape $T=5$ - Static shape]{Time optimal shape $T=5$ - Static shape:
\subref{fig:ex3-a} $t = 0$;
\subref{fig:ex3-b} $t = 0.5$;
\subref{fig:ex3-c} $t \in [1,4]$; 
\subref{fig:ex3-d} $t = 4.5$;
\subref{fig:ex3-e} $t = T$;
\subref{fig:ex3-f} static shape}%
\label{fig:ex3}%
\end{figure}

We plot in Figure \ref{fig:ex3} the comparison between the optimal shape at several times (in red) and the optimal static shape (in yellow). We see the same behavior when $t=\frac{T}{2}$.

Now in order to highlight the turnpike phenomenon, we plot the evolution in time of the distance between the optimal dynamic triple and the optimal static one $t \mapsto \Vert y_T(t)-\bar{y} \Vert+\Vert p_T(t)-\bar{p} \Vert+\Vert \chi_{\omega_T(t)}-\chi_{\bar{\omega}} \Vert$.
\begin{figure}[!h]
   \centering
   \includegraphics[scale=0.18]{./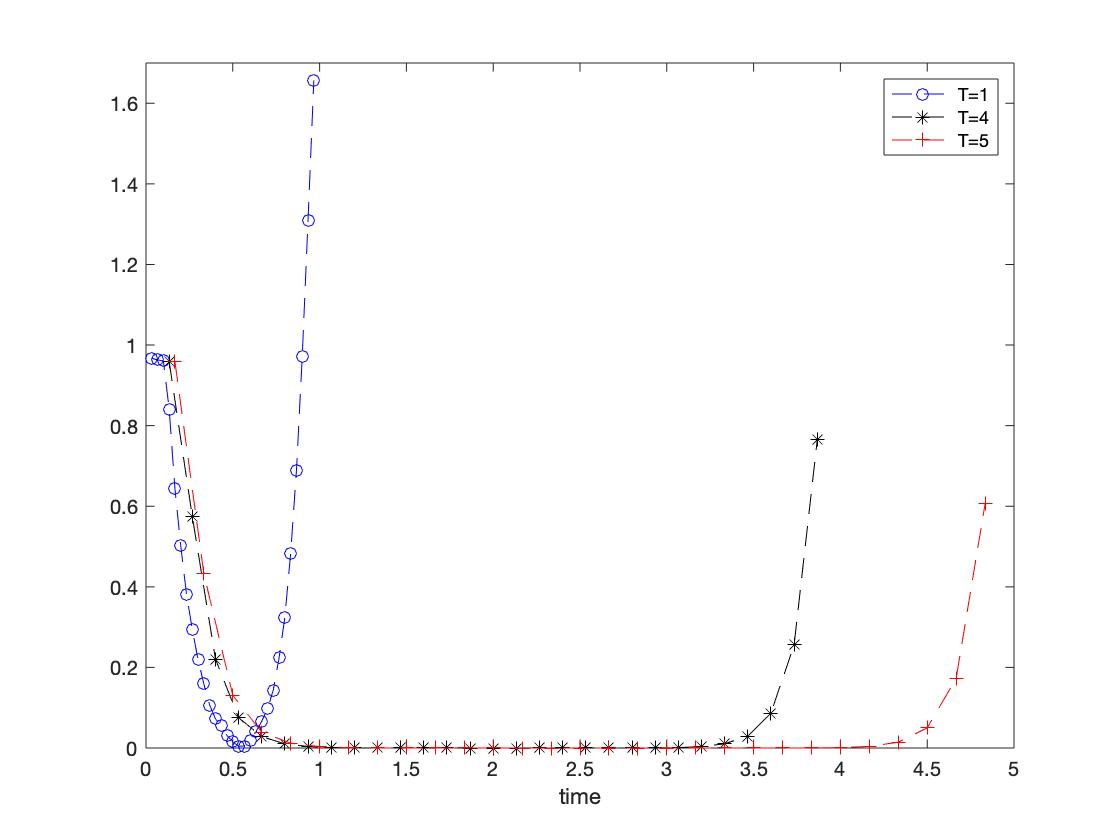}
   \caption{Error between dynamical optimal triple and static one}
   \label{expfig}
\end{figure}
In Figure \ref{expfig} we observe that this function is exponentially close to $0$. This behavior leads us to conjecture that the exponential turnpike property should be satisfied. 

To complete this work, we need to clarify the existence of optimal shapes for \osd \, when $y_d$ is convex. We see numerically in Figure \ref{fig:ex3} the time optimal shape's existence for $y_d$ convex on $\Omega$. Otherwise we can sometimes observe a relaxation phenomenon due to the presence of $\bar{c}$ and $c_T(\cdot)$ in the optimality conditions (\ref{OCocp}) - (\ref{OCsop}). 

We consider the same problem \ocp\, in 2D with $\Omega = [-1,1]^2$, $L = \frac{1}{8}, T = 5$ and the static one associated (\sop). We take $y_d(x,y) = -\frac{1}{20}(x^2+y^2-2)$. 

\begin{figure}[!h]%
\centering
\subfigure[][]{%
\label{fig:ex4-a}%
\includegraphics[scale=0.12]{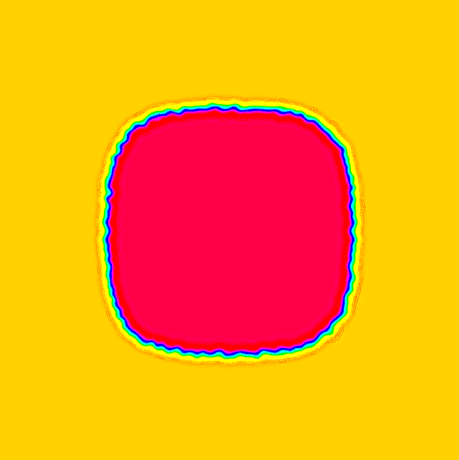}}%
\hspace{25pt}%
\subfigure[][]{%
\label{fig:ex4-b}%
\includegraphics[scale=0.12]{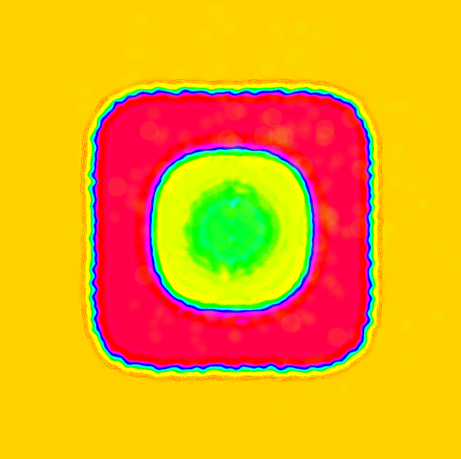}}%
\hspace{25pt}%
\subfigure[][]{%
\label{fig:ex4-c}%
\includegraphics[scale=0.12]{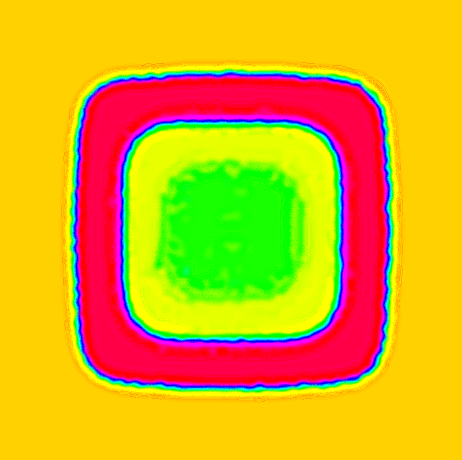}} \\\vspace{6pt}
\hspace{10pt}
\subfigure[][]{%
\label{fig:ex4-d}%
\includegraphics[scale=0.12]{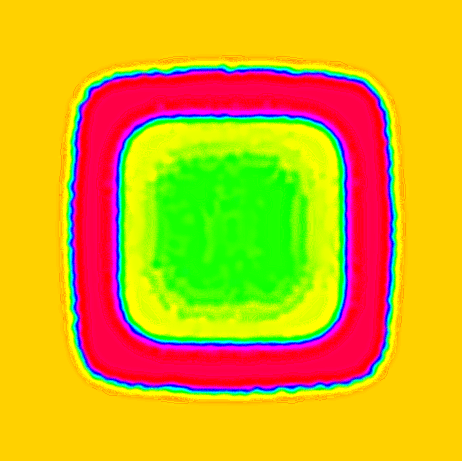}}%
\hspace{25pt}%
\subfigure[][]{%
\label{fig:ex4-e}%
\includegraphics[scale=0.12]{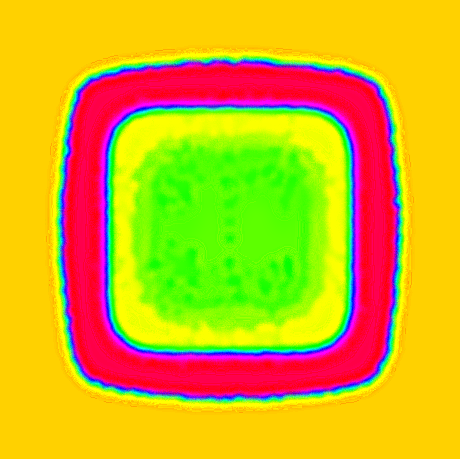}}%
\hspace{25pt}%
\subfigure[][]{%
\label{fig:ex4-f}%
\includegraphics[scale=0.12]{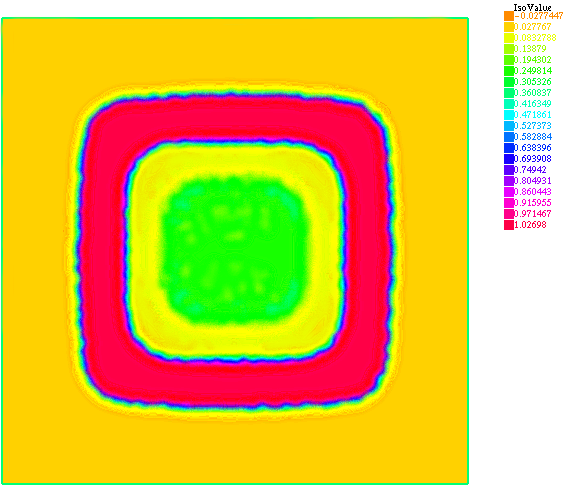}}%
\caption[Relax]{Relaxation phenomenon :
\subref{fig:ex4-a} $t = 0$;
\subref{fig:ex4-b} $t = 0.5$;
\subref{fig:ex4-c} $t \in [1,4]$; 
\subref{fig:ex4-d} $t = 4.5$;
\subref{fig:ex4-e} $t = T$;
\subref{fig:ex4-f} static shape}%
\label{fig:ex4}%
\end{figure}

\begin{figure}[!h]
   \centering
   \includegraphics[scale=0.18]{./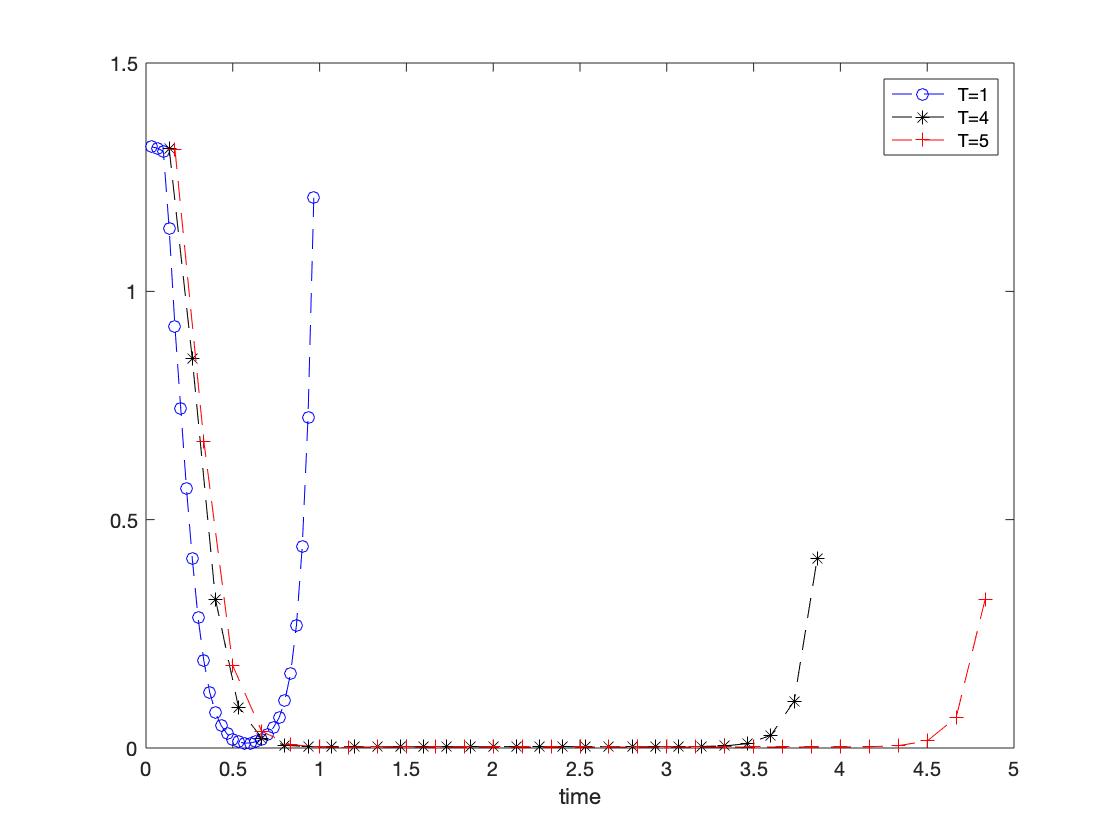}
   \caption{Error between dynamical optimal triple and static one (Relaxation case)}
   \label{expfig_relax}
\end{figure}

In Figure \ref{fig:ex4} we see that optimal control $(a_T,\bar{a})$ of \ocp\, and (\sop)\, take values in $(0,1)$ in the middle of $\Omega$. This illustrates that relaxation occurs for some $y_d$. Here, $y_d$ was chosen such that $-\triangle y_d \in (0,1)$. We have tuned the parameter $L$ to observe the relaxation phenomenon, but for same $y_d$ and smaller $L$, optimal solutions are shapes. Despite the relaxation we see in Figure \ref{expfig_relax} that turnpike still occurs.

\section{Further comments}
Numerical simulations when $\triangle y_d>0$ lead us to conjecture existence of an optimal shape for \osd, because we have not observed any relaxation phenomenon in that case. Existence might be proved thanks to arguments like maximal regularity properties and H\"older estimates for solutions of parabolic equations. 

Moreover, still based on our simulations and particularly on Figure \ref{expfig}, we conjecture the exponential turnpike property.

The work that we presented here is focused on second-order parabolic equations and particularly on the heat equation. Concerning the Mayer case, we have used in our arguments the Weyl law, sup-norm estimates of eigenelements (see \cite{MR3186367}) and analyticity of solutions (analytic-hypoelliptic operator). Nevertheless, concerning the Lagrange case and having in mind \cite{MR3616131,measureturnpikeTZ} it seems reasonable to extend our results to general local parabolic operators which satisfy an energy inequality \eqref{energy} and the maximum principle to ensure existence of solutions. However, some results like Theorem \ref{existencethm}.2-(ii) should be adapted. Moreover we consider a linear partial differential equation which gives uniqueness of the solution thanks to the strict convexity of the criterion. At the contrary, if we do not have uniqueness, as in \cite{measureturnpikeTZ}, the notion of measure-turnpike seems to be a good and soft way to obtain turnpike results.

To go further with the numerical simulations, our objective will be to find optimal shapes evolving in time, solving dynamical shape design problems for more difficult real-life partial differential equations which play a role in fluid mechanics for example. We can find in the recent literature some articles on the optimization of a wavemaker (see \cite{dalphinwave, doi:10.1093/imamat/hxu051}). It is natural to wonder what can happen when considering a wavemaker whose shape can evolve in time. We have in mind the behavior of a static wave that we can observe in the nature (Eisbach Wave in M\"unchen) which arises thanks to the shape of the bottom and when the inside flow is supercritical. We are interested in modeling this phenomenon and taking into account a bottom whose shape may evolve in time in order to design a static wave. Since the target is stationary, we would expect that an optimal evolving bottom stays most of the time static too. There already exist some wavemakers designed for surf-riding inspired by this phenomenon (see \cite{citywave}).

\appendix
\section{Energy inequality}\label{sec_app}
We recall some useful inequalities to study existence and turnpike. Since $\theta$ satisfies \eqref{ineq_theta}, we can find $\beta > 0, \gamma \geq 0$ such that $\beta \geq \gamma$ and
\begin{equation}
\label{ineqenergyellip}
(Au,u)\geq\beta\Vert u\Vert^2_{H^1_0(\Omega)} - \gamma\Vert u\Vert^2_{L^2(\Omega)}.
\end{equation}
From this follows the energy inequality (see \cite[Chapter 7, Theorem 2]{MR2597943}): there exists $C>0$ such that, for any solution $y$ of (\ref{heat_convex}), for almost every $t\in[0,T]$,
\begin{equation}
\Vert y(t) \Vert^{2} + \int_{0}^{t} \Vert y(s) \Vert_{H_0^1(\Omega)}^{2} \,ds\leq C\left(\Vert y_{0} \Vert^{2} + \int_{0}^{t}\Vert  a(s) \Vert^{2} \, ds\right).
\label{energy}
\end{equation}
We improve this inequality. Having in mind \eqref{ineqenergyellip}, the Poincar\'e inequality and that $y$ verifies \eqref{heat_convex}, we find two constants $C_1,C_2>0$ such that $\frac{d}{dt}\Vert y(t) \Vert^2 + C_1 \Vert y(t) \Vert^2 = f(t) \leq C_2 \Vert a(t) \Vert^2$. We solve this differential equation to get $\Vert y(t) \Vert^2 = \Vert y_0 \Vert^2 e^{-C_1 t} + \int_0^t e^{-C_1(t-s)} f(s)\,ds$. Since for all $t\in(0,T), f(t) \leq C_2 \Vert a(t) \Vert^2$, we obtain that for almost every $t\in (0,T)$,
\begin{equation}
\Vert y(t) \Vert^{2} \leq\ \Vert y_{0} \Vert^{2}e^{-C_1 t} + C_2 \int_{0}^{t}e^{-C_1(t-s)}\Vert a(s) \Vert^{2} \, ds.
\label{gronwall}
\end{equation}
The constants $C,C_1,C_2$ depend only on the domain $\Omega$ (Poincar\'e inequality) and on the operator $A$ and not on final time $T$ since \eqref{ineqenergyellip} is satisfied with $\beta \geq \gamma$. 

\paragraph*{\bf Acknowledgment}
{\small This project has received funding from the Grants ICON-ANR-16-ACHN-0014 and Finite4SoS ANR-15-CE23-0007-01 of the French ANR, the European Research Council (ERC) under the European Union's Horizon 2020 research and innovation programme (grant agreement 694126-DyCon), the Alexander von Humboldt-Professorship program, the Air Force Office of Scientific Research under Award NO: FA9550-18-1-0242, Grant MTM2017-92996-C2-1-R COSNET of MINECO (Spain) and by the ELKARTEK project KK-2018/00083 ROAD2DC of the Basque Government, Transregio 154 Project *Mathematical Modelling, Simulation and Optimization using the Example of Gas Networks* of the German DFG, the European Union Horizon 2020 research and innovation programme under the Marie Sklodowska-Curie grant agreement 765579-ConFlex.}

\bibliographystyle{abbrv}
\bibliography{bibliography.bib}

\end{document}